\let\cref\Cref
\newtheorem{theorem}{Theorem}[section]
\newtheorem{prop}[theorem]{Proposition}
\newtheorem{rmk}[theorem]{Remark}
\newcommand{\X}{X}
\newcommand{\field}{\mathbb{F}}
\newcommand{\caB}{\mathcal{B}}
\newcommand{\Hfunc}{\mathrm{H}}
\newcommand{\rmH}{\mathrm{H}}
\newcommand{\Xfunc}{\mathbb{X}}
\newcommand{\Yfunc}{\mathbb{Y}}
\newcommand{\VR}{\mathbf{VR}}
\newcommand{\cupprod}{\mathbf{cup}}
\newcommand{\bsigma}{\boldsymbol{\sigma}}
\newcommand{\udim}{k}
\newcommand{\card}{\operatorname{card}}
\newcommand{\Top}{\mathbf{Top}}
\newcommand{\Int}{\mathbf{Int}}
\title{Doughnut or Mickey Mouse? Detecting Toroidal Structure in Data through Persistent Cup-Length}
\author{
  Ekaterina S. Ivshina\thanks{\texttt{eivshina@g.harvard.edu}} \\
  John A. Paulson School of Engineering and Applied Sciences\\
  Harvard University\\
   \\
  \And
  Galit Anikeeva \\
  Center for Theoretical Physics - a Leinweber Institute \\ MIT \\
  \AND
  Ling Zhou\thanks{\texttt{ling.zhou@duke.edu}} \\
  Department of Mathematics \\
  Duke University  
}
\begin{document}

\maketitle

\begin{abstract}
 Understanding the structure of high-dimensional data is fundamental to neuroscience and other data-intensive scientific fields. While persistent homology effectively identifies basic topological features such as "holes," it lacks the ability to reliably detect more complex topologies, particularly toroidal structures, despite previous heuristic attempts. To address this limitation, recent work introduced persistent cup-length, a novel topological invariant derived from persistent cohomology. In this paper, we present the first implementation of this method and demonstrate its practical effectiveness in detecting toroidal structures, uncovering topological features beyond the reach of persistent homology alone. Our implementation overcomes computational bottlenecks through strategic optimization, efficient integration with the Ripser library, and the application of landmark subsampling techniques. By applying our method to grid cell population activity data, we demonstrate that persistent cup-length effectively identifies toroidal structures in neural manifolds. Our approach offers a powerful new tool for analyzing high-dimensional data, advancing existing topological methods.
\end{abstract}

\section{Introduction}\label{sec:intro}
Neural representations are increasingly understood to inhabit low-dimensional, nonlinear “manifolds” within the brain \cite{Gallego2018, rybakken2018decodingneuraldatausing,spreemann2015usingpersistenthomologyreveal, kang2021evaluating,Gardner2022}. %For example, cortical population activity can preserve neural manifolds underlying diverse motor behaviors . Grid cell populations have been observed to span a toroidal manifold\cite{Gardner2022}, and cohomological feature extraction methods have been used to decode neural data \cite{rybakken2018decodingneuraldatausing, de_Silva_2009, kang2021evaluating}.
Topological Data Analysis (TDA) provides a powerful framework for understanding the “shape” of such representations by capturing topological features such as connected components, loops, and voids via persistent homology \cite{frosini1990distance,frosini1992measuring,robins1999towards,zomorodian2004computing,cohen2007stability,edelsbrunner2008persistent,carlsson2009topology,carlsson2020persistent}. Previous works used persistent homology heuristically to detect structure in neural data. For example, to detect toroidal structure in neural activity,  researchers looked for two persistent 1-dimensional cycles (loops) and one persistent 2-dimensional cycle (void) \cite{spreemann2015usingpersistenthomologyreveal,Gardner2022}. However, persistent homology focuses on the additive structure of homology groups and can fail to distinguish spaces with the same Betti numbers but fundamentally different topological structures. For example, persistent homology cannot distinguish between a torus (colloquially called a doughnut) and a wedge sum of two circles and a sphere (colloquially called a "Mickey Mouse"); both spaces have two persistent 1-dimensional loops and one persistent 2-dimensional void (see \cref{fig:torus-vs-bunny-persistent}). 

To overcome these limitations, recent work has explored the use of the additional multiplicative structure in cohomology, carried via the cup product \cite{huang2005cup, yarmola2010persistence, 
gakhar2020, belchi2021a, 
medina2022per_st,contreras2022persistent,contessoto2022persistentcuplength, 
dey2024cupproductpersistenceefficient,memoli2024persistent}. However, the adoption of such invariants in TDA has been limited because of computational challenges. 
We contribute to this line of work by not only providing the first practical implementation of a cup product-based persistence algorithm introduced in \cite{contessoto2022persistentcuplength}, but also reinterpreting this computation and linking it to detecting toroidal structure in data.

The cup product encodes interactions among cohomology classes, giving rise to the \emph{cup-length} invariant, which captures higher-order structure. 
Intuitively, the cup-length of a space measures how many cohomology classes of positive degree can be multiplied together (via the cup product) before the result vanishes. It reflects the richness of the cohomology ring beyond just the ranks given by Betti numbers. For example, a cup-length of 2 means there exist two cohomology classes whose product is nontrivial, yet the cup product of any three such classes is zero.    A precise definition of cup-length is provided in \cref{subsub:cohomology and cup length}. 

The cup-length invariant can help distinguish topologically distinct spaces that share the same homology. For instance, the torus \(T^2\) and the wedge sum space \(S^1 \vee S^2 \vee S^1\) share identical homology groups yet differ in cup-lengths. The torus \(T^2\) has two linearly independent 1-dimensional cohomology classes whose cup product yields a nontrivial generator in dimension 2, resulting in a cup-length of 2; each individual class, nonetheless, has trivial self-cup product. In contrast, \(S^1 \vee S^2 \vee S^1\) also has two linearly independent 1-dimensional cohomology classes, but the cup product of any pair vanishes, yielding a cup-length of 1. 

Interestingly, a cup-length of 2, combined with vanishing self-cup products, can be used to detect  toroidal structure. Specifically, we say that a space \( X \) has a toroidal component if its cohomology ring contains a subring isomorphic to the cohomology ring of \( T^2 \). In the special case where \( X \) is a connected, closed, orientable surface, the classification theorem of surfaces implies that if \( X \) has a toroidal component, then it is a connected sum of tori. 
This provides a topological interpretation of the notion of a toroidal component. For details, see Section \ref{sec:toroidal-theory}.

This method of detecting toroidal structure can be applied in practice when analyzing data at multiple scales of geometric detail, using intermediate results from the algorithm computing the \emph{persistent cup-length}  invariant proposed in \cite{contessoto2022persistentcuplength}. While this algorithm computes $\ell$-fold cup products of representative cocycles, its final output reports only the persistence intervals over which cup-length is nonzero, omitting the specific cocycles whose cup product results in nontrivial cup-length. However, this information—which cocycles participate in the nontrivial cup products—carries significant geometric meaning, providing finer-grained insights that are not accessible from the cup-length invariant alone. For example, in detecting toroidal components, it is crucial to identify whether a cup-length of 2 arises from two distinct 1-dimensional cocycles, as this reflects the multiplicative structure characteristic of a torus.

In this work, we present the first implementation and applications of the persistent cup-length algorithm with polynomial-time guarantees \cite{contessoto2022persistentcuplength}, reinterpreting the cup-length computation and establishing theoretical results that link  cup-length to toroidal structure. 
Our implementation overcomes computational bottlenecks through strategic optimization, efficient integration with the Ripser library \cite{bauerGithub, ctralie2018ripser, Bauer2021Ripser}, and the application of landmark subsampling techniques, utilizing the DREiMac library \cite{Perea2023}. Our contributions are:

\begin{enumerate}
    \item An implementation and interpretation of the persistent cup-length algorithm for both simplexwise filtrations (adding simplices sequentially) and metric-induced filtrations  (e.g., Vietoris–Rips). Our code is available on GitHub: \href{https://github.com/eivshina/persistent-cup-length}{https://github.com/eivshina/persistent-cup-length}. 
    \item A theoretical guarantee that a cup-length of 2 in a connected, closed, orientable surface implies the existence of a toroidal component.
    \item Experimental validation of the algorithm using synthetic datasets and grid cell simulations.
    \item An analysis of grid cell recordings, providing rigorous detection of toroidal topology and offering stronger evidence for continuous attractor network (CAN) models underlying spatial navigation.  
\end{enumerate} 

Our work offers a novel topological tool for understanding and leveraging the structure of data. The remaining limitations include computational complexity for large datasets, sensitivity to filtration parameters, and, in general settings, dependence on the choice of cocycle representatives \cite[Example 18]{contessoto2022persistentcuplength}. 
Notably, for certain spaces, including those considered in this paper (such as the torus), the output of our algorithm is independent of the choice of representative cocycles (see \cref{rmk:independent of representative}). 
Also, developing efficient strategies for landmark selection and parameter tuning will be essential for scaling to broader applications.

 \begin{figure}
  \centering
  \includegraphics[width=1\textwidth]{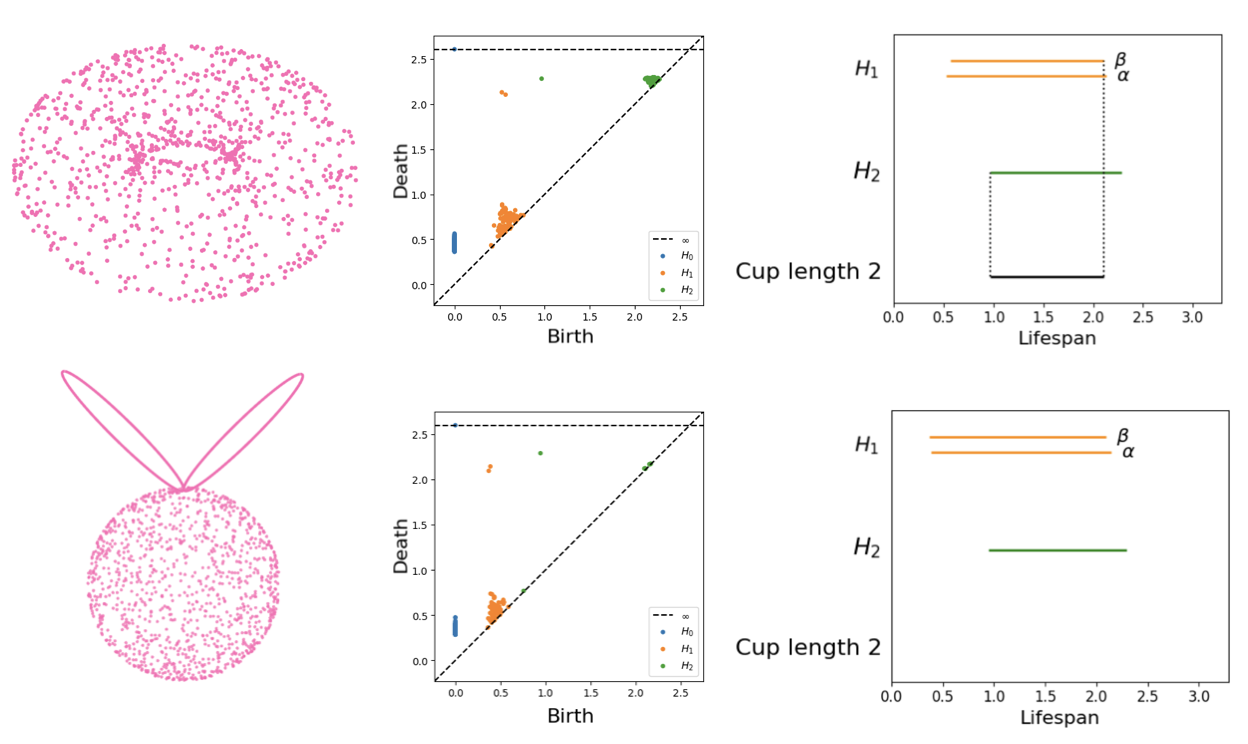}
  \caption{(Top) Torus  \(S^1\times S^1\); (Bottom) wedge sum \(S^1\vee S^2 \vee S^1 \). For each dataset: (left) point cloud, (middle) persistence diagram, and (right) cup-length computation. Persistence is computed using 150 landmarks. Both datasets exhibit one persistent connected component, two persistent 1-dimensional loops, and one persistent 2-dimensional void—features that are indistinguishable using persistent homology alone.
  The cup-length computation shows two most persistent 1-dimensional bars (orange), the most persistent 2-dimensional bar (green), as well as the resulting cup-length-2 interval (black).  Torus dataset has persistent cup-length-2 interval while \(S^1\vee S^2 \vee S^1 \) contains none.}
  \label{fig:torus-vs-bunny-persistent}
\end{figure}

\section{What is persistent cup-length, and why use it?}

Previous works (e.g., \cite{spreemann2015usingpersistenthomologyreveal, kang2021evaluating, Gardner2022}) used heuristic approaches to identify appropriate scales for detecting toroidal structure in neural data. For example, \cite{kang2021evaluating}   measured how clearly the two longest-lived 1-dimensional cycles separate from the third (expected to be short for a torus), and used this as an indicator of toroidal topology in the barcode. While effective in some settings, this heuristic is not mathematically rigorous and may fail in more complex topologies. For instance, if a circle is attached to a torus, the third persistent 1-dimensional cycle may have a comparable lifespan to the first two, potentially obscuring the torus even when it remains a dominant feature of the space.
Moreover, the usual approach of identifying two persistent 1-dimensional cycles and one persistent 2-dimensional void (for example, in \cite{Gardner2022}) is not enough to detect a torus; both torus and a wedge sum of two circles and a sphere share the same homology (see \cref{fig:torus-vs-bunny-persistent}). Consequently, to rigorously detect toroidal structure, one must go beyond persistent homology. 
The persistent cup-length implemented in this work enables us to distinguish between certain shapes that are homologically identical but topologically distinct—such as a torus and the wedge sum of two circles with a sphere—providing a mathematically principled method for detecting toroidal topology in data. 

\paragraph{Persistent cup-length.}
We build on the framework introduced in \cite{contessoto2022persistentcuplength}, which defines the \emph{persistent cup-length} function based on how cup products evolve across a filtration. This invariant captures the maximal number of cohomology classes of positive degree whose cup product remains nontrivial over a given interval, providing a computable and stable descriptor of higher-order topological structure.

We apply this construction to the Vietoris--Rips filtration of a finite metric space. Given a point cloud \(X\), the filtration \(\{\VR(X, \epsilon)\}_{\epsilon \geq 0}\) consists of simplicial complexes where each \(\VR(X, \epsilon)\) includes all simplices with vertices pairwise within distance \(\epsilon\). We denote the persistent cup-length of this filtration by \(\cupprod(X)\). See \cref{subsub:persistent cup-length} for a formal definition and discussion of its properties.

Our pipeline begins by computing the persistence diagram (or barcode; see \cite{zomorodian2004computing}) of persistent cohomology, which encodes the birth and death of cohomology classes. Each interval (or bar) in the diagram represents a cohomological feature that persists over a range of scales. We annotate each bar with a representative cocycle and compute cup products among them to build the cup-length diagram, from which we recover the cup-length function utilizing \cite[Theorem 1]{contessoto2022persistentcuplength}:
\begin{center}
    persistence diagram $\to$ \boxed{\text{cup product of annotated bars}} $\to$ cup-length diagram/function.
\end{center}
A self-contained summary of the relevant definitions and constructions is provided in \ref{sec:theory}.

In this paper, we focus on the intermediate step of computing cup products among annotated bars, which we refer to as the \emph{cup-length computation}, rather than analyzing the cup-length diagram or function which obscure information on the specific contributions of individual bars and cocycles that result in cup lengths.

%\cref{fig:cup-length diagram and function} summarizes the pipeline. 

\section{Algorithm}\label{sec:algorithm}
\subsection{Main algorithm}

In \cite{contessoto2022persistentcuplength}, the persistent cup-length algorithm was shown to operate in polynomial time. %In this section, we discuss how we addressed multiple inaccuracies in the originally proposed algorithm in \cite{contessoto2022persistentcuplength} and overcame computational bottlenecks.  
We have implemented two versions of the algorithm: one for general simplex-wise filtrations and another tailored to Vietoris–Rips filtrations computed via Ripser \cite{Bauer2021Ripser}. 
We focus here on the latter. The details of both are presented in \ref{sec:alg-main}.
 
Computing persistent cohomology on large datasets can be computationally expensive. 
We mitigate this by employing landmark-based subsampling \citep{Bauer2021Ripser}. 
Densely selected landmarks allow us to approximate the topological features of the full dataset with only a small controllable error.
This approximation is justified by stability results: if the landmarks are $\epsilon$-dense, then the persistence diagram changes by at most $\epsilon/2$ in bottleneck distance \cite{cohen2007stability}, and the persistent cup-length invariant changes by at most $\epsilon/2$ in erosion distance \cite[Theorem 2]{contessoto2022persistentcuplength}.

Our implementation operates on a distance-matrix representation of the input data and uses coefficients in the field $\mathbb{Z}/2\mathbb{Z}$. 
We employ Ripser to compute barcodes along with a set of representative cocycles.
The core components of our algorithm are:
\begin{enumerate}
    \item computing cup products at the cochain level \cite[Algorithm 1]{contessoto2022persistentcuplength}, and
    \item solving the coboundary problem across filtration scales—namely, determining the earliest time (moving from larger to smaller times) at which a given cocycle becomes a coboundary.
\end{enumerate}
The second step critically depends on the coboundary matrix and requires precise knowledge of the filtration times for each simplex. 
However, Ripser does not directly output filtration times or the full coboundary matrix needed for cup product computations, particularly in the landmark-based setting. Recovering this information requires reconstructing the simplices, their filtration indices, and the landmark set—a nontrivial step in our implementation. We addressed this using the combinatorial number system and utilities from the DREiMac library \cite{Perea2023}.

In addition, we revised the persistent cup-length algorithm from \cite{contessoto2022persistentcuplength} to correct several implementation-level inaccuracies and improve computational reliability. Key fixes include correcting the birth and death times used to initialize the output matrix, properly carrying over previously computed information between iterations, and accurately updating cup-length values when new nontrivial cup products are found. Further details are provided in \ref{sec:alg-main}.

\subsection{Algorithm validation and its properties}\label{sec:algorithm-validation}

We validated our implementation on synthetic data with both continuous and simplexwise filtrations. We discuss a few examples in this section. The standard procedure for applying our persistent cup-length algorithm is as follows. 
First, compute a distance matrix from the input point cloud (we use the Euclidean distance in all examples discussed below except for Section \ref{donut-vs-mickey}). Then, compute a Vietoris–Rips filtration with landmark-based subsampling to generate persistence diagrams and representative cocycles using Ripser \citep{ctralie2018ripser}. Finally, apply our algorithm to compute the persistent cup-length.

Throughout all experiments in this work, we verified that any cup-length-2 interval, when present, was generated by two independent 1-cocycles. This condition is necessary to apply Theorem \ref{thm:torus} and conclude the presence of a toroidal component.

\subsubsection{Doughnut versus Mickey Mouse}\label{donut-vs-mickey}

We model the torus as $T^2 = S^1 \times S^1$, representing each point by angles $(\theta, \phi) \in [0, 2\pi)^2$. Here, $S^1$ denotes the unit circle, and the intrinsic distance on the torus is defined as the product of circular geodesic distances on each factor:

$$
d((\theta_1, \phi_1), (\theta_2, \phi_2)) = \sqrt{d_{S^1}(\theta_1, \theta_2)^2 + d_{S^1}(\phi_1, \phi_2)^2},
$$

where $d_{S^1}(\alpha, \beta) = \min(|\alpha - \beta|, 2\pi - |\alpha - \beta|)$ denotes the shortest arc distance on the circle. We sample 1,000 points uniformly from the torus $S^1 \times S^1$ (see \cref{fig:torus-vs-bunny-persistent}, top left), and construct a distance matrix using the defined intrinsic metric.

We construct a wedge sum space $S^1 \vee S^2 \vee S^1$, where each component is endowed with its standard intrinsic metric (arc length on $S^1$, geodesic on $S^2$), and all are glued at a common basepoint $x_0$. The distance between points is defined as follows. Within the same component, use the intrinsic geodesic distance. Between components, for $x \in A$, $y \in B$, define

$$
d(x, y) = d_A(x, x_0) + d_B(y, x_0).
$$

This defines a path metric on the glued space that treats all transitions between components as routed through the basepoint.  We sample 1,200 points uniformly from the sphere and 400 points from each circle.

We apply the Vietoris-Rips filtration method with 150 landmarks to generate persistence diagrams and compute the cup product of annotated bars (see \cref{fig:torus-vs-bunny-persistent}, middle and right panels). Our results demonstrate that although persistent homology alone cannot distinguish the two spaces—both exhibit one persistent connected component, two persistent 1-dimensional loops, and one persistent 2-dimensional void—our algorithm successfully identifies a persistent cup-length-2 interval for the torus, whereas $S^1 \vee S^2 \vee S^1$ exhibits no cup-length greater than 1. Moreover, the cup-length-2 interval was generated by two independent 1-cocyles in the torus. This detection confirms toroidal structure according to Theorem \ref{thm:torus}.

\subsubsection{Stability under wedge sum with circles and spheres}

Persistent cup-length algorithm can detect toroidal structure even in the presence of additional circles or spheres. Specifically, by Proposition 11 in \cite{contessoto2022persistentcuplength}, the cup-length function of a torus $T^2$  with a number of circles attached is the same as the cup-length function of the torus itself since $\cupprod(S^1) = 1$. In other words, $\cupprod(T^2\vee S^1 \vee \dots \vee S^1 ) = \cupprod(T^2)$. Figure \ref{fig:torus-wedge-circles} confirms this result in a special case where a single circle is attached to the torus at one point. We reliably detect cup-length-2 interval, generated by two independent 1-cocyles,  with or without the presence of circles. Note that the same result holds if we attach spheres $S^2$ to the torus since $\cupprod(S^2) = 1$. Thus,  $\cupprod(T^2\vee S^2 \vee \dots \vee S^2) = \cupprod(T^2)$. %Figure \ref{fig:torus-wedge-sphere} shows that this is indeed the case for a special case when we attach one sphere.

 \begin{figure}
  \centering
  \includegraphics[width=1\textwidth]{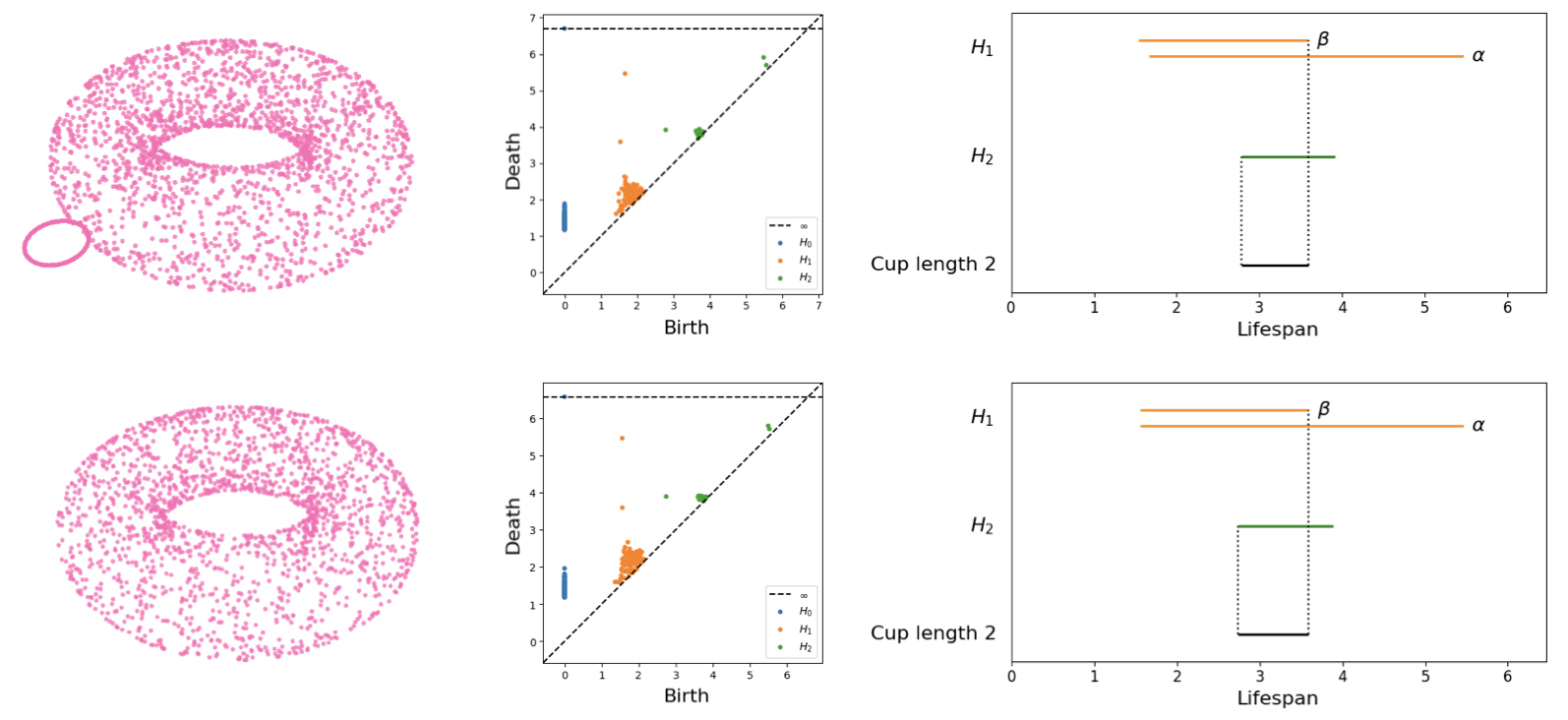}
  \caption{(Top) $T^2 \vee S^1$; (Bottom) $T^2$: (left) point cloud (2,300 points for $T^2 \vee S^1$, 2,000 points for $T^2$), (middle) persistence diagram computed using 150 landmarks, and (right) cup-length computation.  The cup-length computation shows two most persistent 1-dimensional bars (orange), the most persistent 2-dimensional bar (green), as well as the resulting cup-length-2 interval (black). The cup-length 2 is unaffected by the presence of the attached circles.}
  \label{fig:torus-wedge-circles}
\end{figure}

\subsubsection{Cup-length doesn't always come from the most persistent 2-bar}

Previous works have relied on heuristics—such as identifying two most persistent 1-dimensional loops and one most persistent 2-dimensional void \cite{Gardner2022}—to suggest the presence of toroidal structure. However, these heuristics are limited and can be misleading. Our persistent cup-length algorithm offers a more nuanced view. For example, consider the space formed by attaching a sphere to a torus at a single point (see Figure \ref{fig:torus-wedge-big-sphere}). In this example, the most persistent 2-dimensional feature corresponds to the spherical void, which does not contribute to any nontrivial cup product indicative of a torus. The true toroidal structure arises from the two most persistent 1-dimensional loops and the second most persistent 2-dimensional void, as shown in the right panel of Figure \ref{fig:torus-wedge-big-sphere}.

 \begin{figure}
  \centering
  \includegraphics[width=1\textwidth]{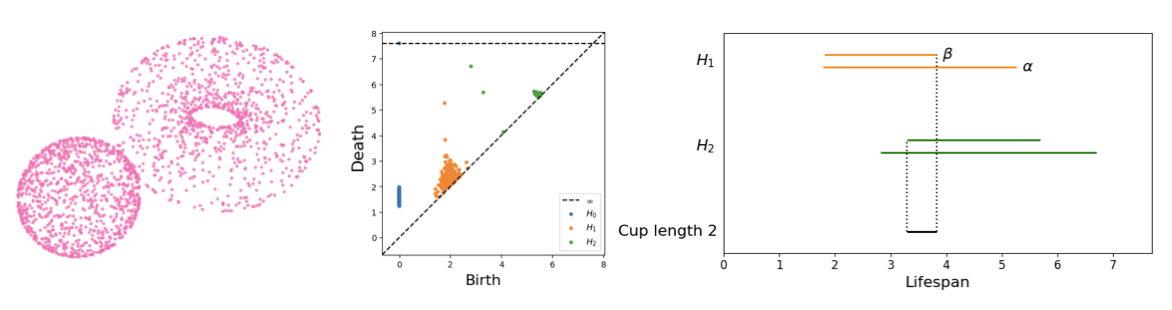}
  \caption{$T^2 \vee S^2$: point cloud of 2,000 points (left), persistence diagram (middle), and cup-length computation (right). Persistence is computed using 250 landmarks. The cup-length computation shows two most persistent 1-dimensional bars (orange),  two most persistent 2-dimensional bars (green), as well as the resulting cup-length-2 interval (black). The toroidal structure arises from the two most persistent 1-dimensional bars and the second most persistent 2-dimensional bar.}
  \label{fig:torus-wedge-big-sphere}
\end{figure}

\subsubsection{Short 2-bar can still indicate toroidal structure}

In this example, we sample points from a torus but deliberately remove a “cap” — a patch of surface defined by restricting one or both of the torus’s angular coordinates to lie within a given interval. Because this defect delays the appearance of the 2-dimensional void in the Vietoris–Rips filtration, the persistence diagram by itself may not reveal that the data originally came from a torus with a missing patch. Nevertheless, our algorithm still uncovers a cup‐length-2 interval, generated by two independent 1-cocyles, which signals that the underlying topology is toroidal (see Figure \ref{fig:torus-with-cap-removed}).

 \begin{figure}
  \centering
  \includegraphics[width=1\textwidth]{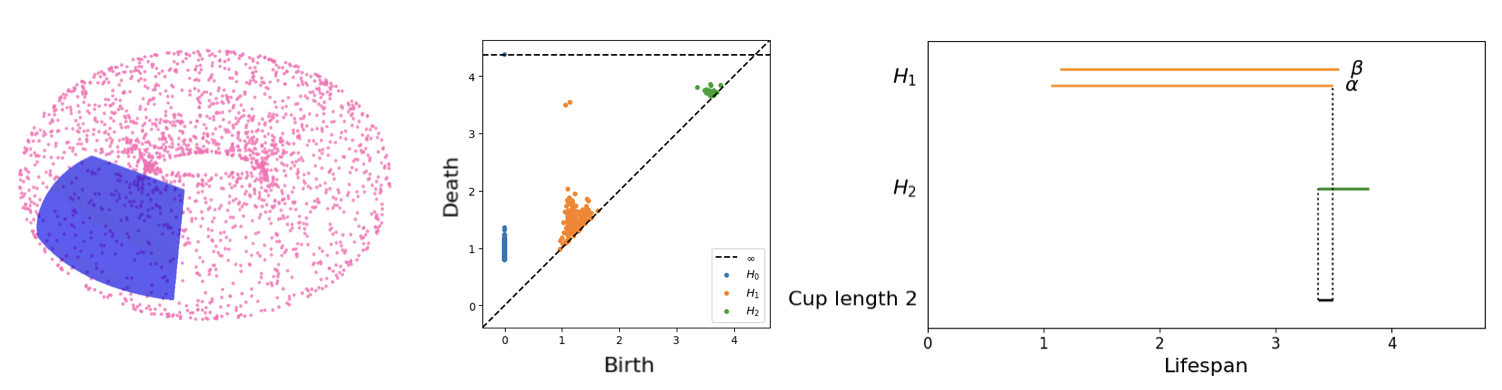}
  \caption{$T^2$ with a cap removed: point cloud of 2,000 points with the excluded region shaded blue (left), persistence diagram (middle), and cup-length computation (right). Persistence is computed using 250 landmarks. The cup-length computation shows two most persistent 1-dimensional bars (orange),  one most persistent — but still brief — 2-dimensional bar (green), as well as the resulting cup-length-2 interval (black).}
  \label{fig:torus-with-cap-removed}
\end{figure}

\subsubsection{Short cup-length 2 can still indicate toroidal topology}\label{sec:deformed-torus}

Figure \ref{fig:torus-deformed} (left) shows 2,000 points sampled from a torus whose minor radius has been locally enlarged along one generating circle. Although its persistence diagram (Figure \ref{fig:torus-deformed}, center) still contains two long-lived 1-bars and one long-lived 2-bar, the cup–length-2 interval (Figure \ref{fig:torus-deformed}, right) is very short. Hence, while the existence of a cup-length-2 interval,  generated by two independent 1-cocyles, confirms toroidal topology, its lifetime also reflects geometric deformation. In the standard torus, this interval persists over a longer period in the Vietoris–Rips filtration, but when one cycle is expanded, it appears only transiently. This nuanced perspective aids in interpreting experimental results on real data (e.g., grid-cell recordings in Section \ref{app:grid cell}).

 \begin{figure}
  \centering
  \includegraphics[width=1\textwidth]{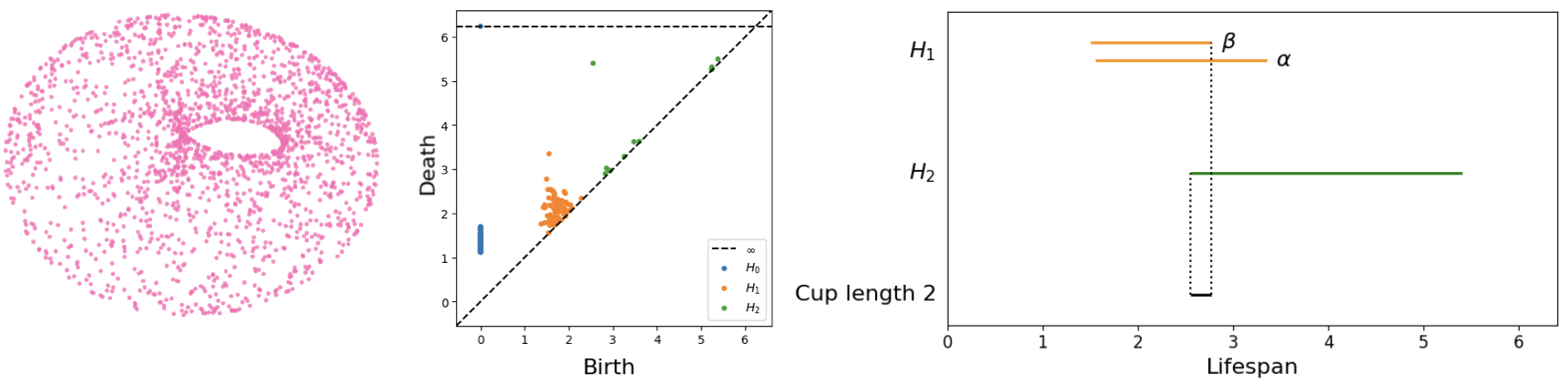}
  \caption{$T^2$ with a deformation: point cloud of 2,000 points (left), persistence diagram (middle), and cup-length computation (right). Persistence is computed using 150 landmarks. The cup-length computation shows two most persistent 1-dimensional bars (orange),  one most persistent 2-dimensional bar (green), as well as the resulting cup-length-2 interval (black).}
  \label{fig:torus-deformed}
\end{figure}

\subsubsection{Robustness to noise}

We sampled $N=2000$ points on a torus of major radius $R=5$ and minor radius $r=2$, then added i.i.d.\ Gaussian noise $\mathcal{N}(0,\sigma^2 I)$ with $\sigma\in\{0.05, 0.1, 0.2, 0.25, 0.3\}$. For each noise level and each landmark count $n_{\rm land}\in\{150,200,250\}$, we ran 5 trials to check whether our persistent cup-length algorithm can still detect a nontrivial cup-length-2 interval, recovering toroidal topology. We call a trial successful if a nontrivial cup-length-2 interval,  generated by two independent 1-cocyles, was identified. The reported detection rate in Figure \ref{fig:detection-rate} is the fraction of successful trials per $(\sigma,n_{\rm land})$ pair. At low noise levels ($\sigma\le0.2$), the algorithm achieves a 100\% detection rate for all landmark counts.  At the highest noise tested ($\sigma=0.3$), performance degrades to 0\% at $n_{\rm land}=150$, improves modestly to 20\% at $n_{\rm land}=200$, and recovers to 80\% at $n_{\rm land}=250$.  Increasing landmark density can partially compensate for high noise, while low noise is effectively handled even with relatively few landmarks.

 \begin{figure}
  \centering
  \includegraphics[width=0.4\textwidth]{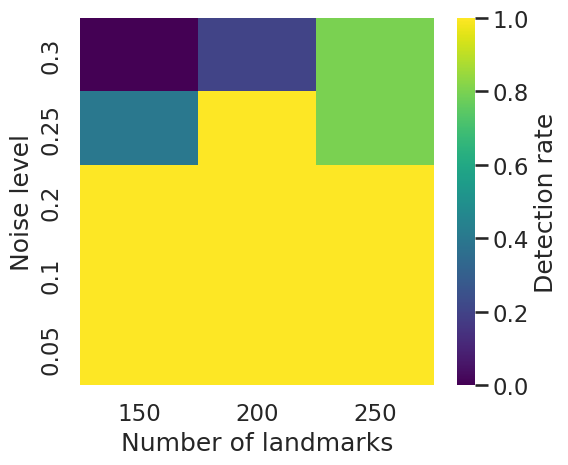}
  \caption{Detection rate as a function of noise level $\sigma$ and the number of landmarks.}
  \label{fig:detection-rate}
\end{figure}

\subsubsection{Grid cell simulations}

In this section, following the procedure outlined in \cite{kang2021evaluating}, we simulate grid cell recordings and apply  persistent cup-length to discover topological structure within the data, contrasting it with persistent homology. We simulated a single grid‐cell module with scale \(l=40\)\,cm and orientation \(\phi=0\).  For each trial, we sampled \(N\) phase‐offset vectors 
\[
b_i \;\sim\;\mathrm{Uniform}\bigl([-{\tfrac12},{\tfrac12}]^2\bigr),
\]
and evaluated the tuning curve
\[
s_{\rm grid}(x; b)\;=\;f\!\Bigl(\,\frac{1}{0.45\,l}\|\,A\,\langle A^{-1}x - b\rangle_{1/2}\|\Bigr),\]
where
\[
f(z)=\begin{cases}\frac{1+\cos(\pi z)}{2},&|z|<1,\\0,&\text{otherwise,}\end{cases}, \quad
A = l \begin{pmatrix}
\cos \phi & \cos\left(\phi + \frac{\pi}{3} \right) \\
\sin \phi & \sin\left(\phi + \frac{\pi}{3} \right)
\end{pmatrix}
\]
along a simulated random walk trajectory over \(T=1000\)\,s (sampled every \(0.2\)\,s, \(5{,}000\) points).  We zeroed out all bins where speed \(<5\)\,cm/s, normalized each neuron by its mean rate, discarded near‐zero timepoints, and geometrically subsampled to \(1{,}000\) points via farthest‐point (max–min) sampling.  Finally, we computed the Vietoris–Rips persistence diagram and used the “largest‐gap” heuristic in \(\rmH_1\), as in \cite{kang2021evaluating}, to define detection rate for persistent homology to heuristically identify a torus (see \cref{fig:success_rate_vary_grid_count}, left). We then applied the persistent cup-length algorithm to the same data; we measured the detection rate by considering whether a nontrivial cup-length-2 interval,  generated by two independent 1-cocyles, was identified (see \cref{fig:success_rate_vary_grid_count}, right).  Persistent cup-length  provides a rigorous topology check while   persistent homology provides a heuristic. We see that for both approaches,  detection rate increases with the number of grid cells, with roughly 20 simulated, idealized grid cells needed for reliable recovery of toroidal topology. We further tested how the two approaches perform as we vary the duration of the recording from 100 s to 1,000 s, in addition to varying the grid cell number. The results are shown in Figure \ref{fig:success_rates_diff_grid_cell_count}. Persistent cup-length algorithm requires roughly 100 idealized grid cells and 250 s recording for reliable topological discovery. Similar results are observed for persistent homology.

%Persistent cohomology, as well as the closely related technique persistent homology, has recently been applied to experimental neural recordings within the spatial representation system. It was used to discover topological structure (Chaudhuri et al., 2019; Rybakken et al., 2019) and decode behavioral variables (Rybakken et al., 2019) from head direction cells. It was also used to do the same for grid cell recordings (Gardner et al., 2021), and researchers have demonstrated topological discovery in simulated grid cell data (Chaudhuri et al., 2019). These works have improved our understanding of the large- scale organization of spatial representation circuits through persistent cohomology.

 \begin{figure}
  \centering
  \includegraphics[width=1\textwidth]{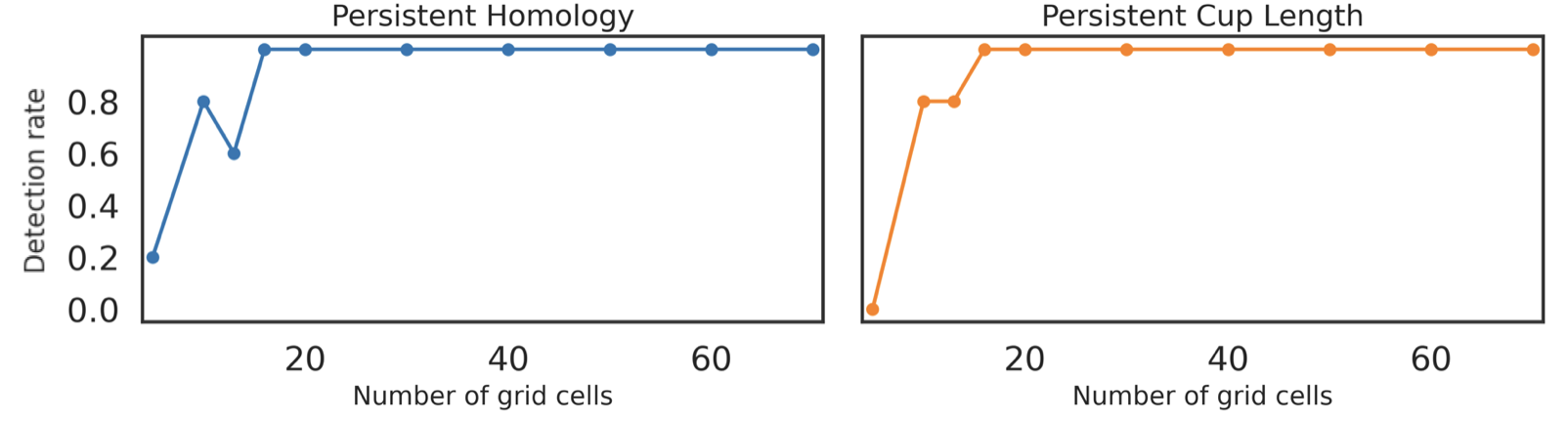}
  \caption{Detection rate as a function of the number of grid cells. (Left) \textit{Heuristic detection} via standard persistent homology—counted as “detected” whenever two distinct 
long 1-bars are found.
(Right) \textit{Rigorous detection} via our persistent cup-length algorithm—counted as “detected” only when a nontrivial cup-length-2 interval is found. In each panel, we plot the detection rate (fraction of five independent trials) that identify toroidal structure against the number of grid cells (persistence computed with 250 landmarks). Although the heuristic $\rmH_1$-count can trigger at  lower numbers of grid cells, it provides no guarantees; our cup-length test consistently uncovers \textit{true} toroidal topology once a modest grid density is reached.}
  \label{fig:success_rate_vary_grid_count}
\end{figure}

 \begin{figure}
  \centering
  \includegraphics[width=1\textwidth]{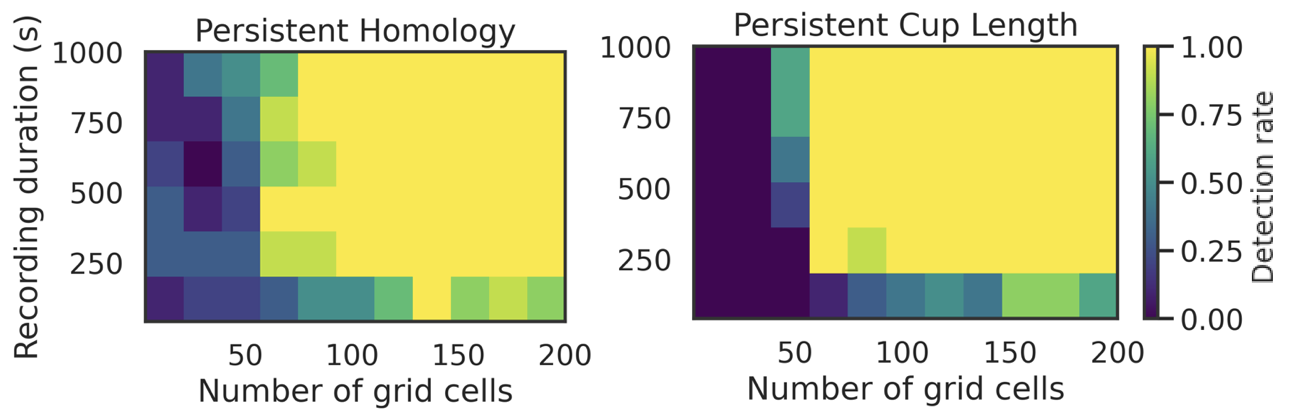}
  \caption{(Left) persistent homology; (right) persistent cup-length. Detection rates for different durations and number of grid cells. The detection of toroidal topology benefits from longer
recording durations and more grid cells. 250 landmarks were used to compute persistence; the results were averaged over 5 trials.}
  \label{fig:success_rates_diff_grid_cell_count}
\end{figure}

\section{Persistent cup-length detects toroidal structures}\label{sec:toroidal-theory}

Recall that we say that a space \( X \) has a toroidal component if its cohomology ring contains a subring isomorphic to the cohomology ring of \( T^2 \).  
In the case where $X$ is a closed, connected, orientable surface, this algebraic condition has a precise topological meaning: by the classification of surfaces, it implies that $X$ is a connected sum of tori. The following theorem formalizes this connection, providing a concrete characterization of toroidal structure in terms of cup products.

\begin{theorem}\label{thm:torus}
Let \(X\) be a closed, connected, orientable surface. Suppose there exist two linearly independent classes \(\alpha, \beta \in \mathrm{H}^1(X; \mathbb{Z}/2\mathbb{Z})\) with $\alpha \smile \beta \neq 0$;  note that orientability ensures $\alpha \smile \alpha = \beta \smile \beta = 0$.
 
Then \(X\) contains a \emph{toroidal component}; that is, in a connected sum decomposition of \(X\) %(in the case of surfaces) or as an embedded subcomplex (in higher dimensions), 
there exists a subspace that is homotopy equivalent to the 2-torus \(T^2\).
\end{theorem}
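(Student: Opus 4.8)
The plan is to use the classification of closed, connected, orientable surfaces: any such $X$ is homeomorphic to a connected sum $\Sigma_g = \#_{i=1}^g T^2$ of $g \geq 0$ tori (with $\Sigma_0 = S^2$). I would first dispose of the cases $g = 0$ and, more importantly, show that the hypothesis forces $g \geq 1$, after which I need to locate a genuine torus summand. The key computational input is the structure of the cup product on $\mathrm{H}^*(\Sigma_g; \mathbb{Z}/2\mathbb{Z})$: with the standard symplectic basis $a_1, b_1, \dots, a_g, b_g$ of $\mathrm{H}^1$, one has $a_i \smile b_i = [\Sigma_g]^\vee$ (the generator of $\mathrm{H}^2$) for each $i$, while $a_i \smile a_j = b_i \smile b_j = 0$ and $a_i \smile b_j = 0$ for $i \neq j$. (Orientability is what makes the $\mathbb{Z}/2$ intersection form symplectic rather than merely symmetric, so the diagonal entries $a_i \smile a_i$, $b_i \smile b_i$ vanish — this is exactly the parenthetical remark in the statement.) If $g = 0$ then $\mathrm{H}^1 = 0$ and no classes $\alpha, \beta$ as in the hypothesis exist, a contradiction; so $g \geq 1$.

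Given $g \geq 1$, I would argue that $\Sigma_g = T^2 \# \Sigma_{g-1}$ exhibits a subspace homotopy equivalent to $T^2$: concretely, remove a small open disk from a torus summand $T^2$ in the connected-sum decomposition, obtaining a once-punctured torus $T^2 \setminus D^2$, which is homotopy equivalent to the wedge $S^1 \vee S^1$ — so that by itself is not yet $T^2$. Instead, the cleaner route is to take the torus summand $T^2$ itself (before the connected-sum surgery, or equivalently collapse the complementary genus-$(g-1)$ piece): there is a degree-one collapse map $q: \Sigma_g \to T^2$ (collapse everything outside one handle-pair to a point and identify the resulting sphere-with-two-disks-removed appropriately), and this $q$ is surjective on $\mathrm{H}^*$ with a section on cohomology given by $a_1 \mapsto a_1$, $b_1 \mapsto b_1$. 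This realizes $\mathrm{H}^*(T^2)$ as a subring. But the statement asks for a subspace homotopy equivalent to $T^2$ sitting inside a connected-sum decomposition, so the honest phrasing is: in the decomposition $X \cong T^2 \# \Sigma_{g-1}$, the $T^2$ connect-summand (a torus with an open disk glued back via the sphere, i.e. the closed torus obtained by capping off the boundary circle of the once-punctured torus appearing as a piece of $X$) is the desired toroidal component. I would state this carefully, since the punctured torus appearing literally inside $X$ is only homotopy equivalent to $S^1 \vee S^1$; the torus is recovered after the capping/collapse.

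The main obstacle is making precise and rigorous the passage between the \emph{algebraic} hypothesis ($\alpha, \beta$ with $\alpha \smile \beta \neq 0$) and the \emph{geometric} conclusion (a torus summand), and in particular ensuring the argument does not secretly only produce the homotopy type of $S^1 \vee S^1$. One clean way to close this gap: once $g \geq 1$ is established, invoke the classification theorem directly — a closed connected orientable surface of genus $g \geq 1$ \emph{is} a connected sum of $g$ tori, so a toroidal summand exists by fiat, and the cup-product hypothesis served only to rule out $g = 0$. I would double-check that $g = 0$ is the \emph{only} case excluded by the hypothesis (it is: every $\Sigma_g$ with $g \geq 1$ does admit such $\alpha, \beta$, e.g. $\alpha = a_1$, $\beta = b_1$), so the theorem is sharp. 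A secondary point to verify is that working over $\mathbb{Z}/2\mathbb{Z}$ loses nothing here: for orientable surfaces the mod-2 cohomology ring already detects the genus via the rank of $\mathrm{H}^1$ together with the nondegeneracy of the cup-product pairing $\mathrm{H}^1 \times \mathrm{H}^1 \to \mathrm{H}^2$, so no integral lift is needed.
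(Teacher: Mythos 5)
Your proposal is correct and follows essentially the same route as the paper: invoke the classification of closed, connected, orientable surfaces, use the nontrivial cup product to rule out the sphere ($g=0$), and conclude that $X\cong T^2\#\cdots\#T^2$ with $g\geq 1$, so a torus summand exists by the classification theorem itself. Your additional care about the punctured-torus-versus-closed-torus distinction is a reasonable clarification of what ``toroidal component'' means, but it does not change the argument, which the paper carries out in the same way.
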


\begin{proof}%[Proof of \cref{thm:torus}]
According to \cite[Section 1.2]{hatcher2000}, any closed, connected surface is homeomorphic to either a sphere, a connected sum of tori, or a connected sum of projective planes.  
Since \( X \) has nontrivial cup products, it cannot be a sphere.  
Moreover, because \( X \) is orientable, it cannot be a connected sum of projective planes.  
Thus, by the classification theorem, \( X \) must be of the form  
\[
X \cong T^2 \# \cdots \# T^2,
\]
where the number of \( T^2 \) summands equals the genus \( g \) of \( X \).  
Since the cohomology ring is nontrivial, we have \( g \geq 1 \), ensuring that at least one torus \( T^2 \) appears in the connected sum decomposition. \qedhere
 
\end{proof}

\section{Revealing toroidal organization in grid cell populations}
\label{app:grid cell}
 
To effectively navigate in complex environments, we need to be able to localize ourselves in space. In 2014, May-Britt Moser \& Edvard Moser discovered grid cells, which are neurons that act as sensors for position. Grid cells fire in a hexagonal pattern of locations, and are organized in modules that collectively form a population code for the animal’s position. 

Recently, \cite{Gardner2022} used persistent homology to show that grid cell population activity spans a manifold consistent with toroidal topology, as expected in a two-dimensional continuous attractor network (CAN) model. To draw this conclusion, \cite{Gardner2022} primarily relied on the heuristic observation that, in 24 out of the 27 grid modules analyzed, four long-lived bars -- one connected component, two 1-dimensional loops, and one 2-dimensional void -- were observed. However, as demonstrated in \cref{fig:torus-vs-bunny-persistent}, such approach alone does not provide conclusive evidence that the underlying topology is indeed toroidal.  

Analyzing  the population activity from high-density recordings from grid cell modules with persistent cup-length can provide a rigorous test of the underlying topology. Specifically, the detection of a persistent cup-length of 2 by our algorithm indicates a stable toroidal structure throughout the filtration, which provides a stronger evidence for CAN theories \citep{Gardner2022}.  
By employing the persistent cup-length algorithm, we reliably confirm the toroidal topology of grid cell population activity in 17 out of 27 grid modules analyzed in \cite{Gardner2022} spanning varying behavioral conditions and different environmental contexts. The invariance of the toroidal topology across  environments and brain states suggests an intrinsic, robust network architecture \cite{Gardner2022}. 

\subsection{Data and its preprocessing} To perform our study, we use the data\footnote{\href{https://figshare.com/articles/dataset/Toroidal_topology_of_population_activity_in_grid_cells/16764508}{https://figshare.com/articles/dataset/Toroidal\_topology\_of\_population\_activity\_in\_grid\_cells/16764508}} 
from \cite{Gardner2022} on three experimentally naive male Long Evans rats (Rats Q, R, and S, 300–500 g at time of implantation). The data consisted of  many hundreds of simultaneously recorded grid cells. The authors recorded extracellular spikes of  thousands of single units in layers II and III of the MEC–parasubiculum region in the freely moving rats with unilateral or bilateral implants. During recordings, the rats were engaged in foraging behaviour in a square open-field (OF) enclosure or on an elevated track (WW), or they slept in a small resting box. We used the same data preprocessing strategy\footnote{ \href{https://github.com/erikher/GridCellTorus}{https://github.com/erikher/GridCellTorus}} as in the original study: Spike times were binned, smoothed, and $z$-scored to account for variability of average firing rates across cells and to decrease computational complexity. Then principal component analysis was applied with 6 components and the point cloud was downsampled to create a $1,200\times 1,200$ distance matrix. Further details on the data collection procedure and preprocessing are available in \cite{Gardner2022}. 

\subsection{Visualization}
 To visualize the population activity of grid cells, we created a 3D representation of the firing rates of each module of grid cells  by applying uniform manifold approximation and projection  (UMAP, \cite{2018arXivUMAP}) to the PCA-reduced representation. The hyperparameters for UMAP were specified as follows: n\textunderscore components=3, metric=“cosine”, n\textunderscore neighbors=5000, min\textunderscore dist=0.8. \cref{fig:rat_r}  (left) shows an example of the resulting 3D visualization reflecting a torus-like pattern.

\subsection{Persistent cup-length analysis}
 
We analyzed 27 distinct grid cell modules across a range of behavioral and sleep conditions, including open field (OF), wagon wheel (WW), rapid eye movement (REM) sleep, and slow-wave sleep (SWS), recorded from rats R, Q, and S. We subsampled the distance matrix for each grid module using 500 landmarks and created a Vietoris–Rips filtration using the Ripser library. We then applied the persistent cup-length algorithm  to assess the presence of a toroidal structure.   Our algorithm computed cup products between all available 1-dimensional cocycles in cohomology and identified instances where these products persisted over a range of filtrations. While we computed cup products for all 1-dimensional cocycles, in all cases where a cup-length value of 2 was detected, it arose specifically from the two most persistent 1-dimensional cycles and the most persistent 2-dimensional void.  

We detected cup-length 2 in 17 out of 27 grid modules, providing a robust confirmation of toroidal topology in those modules by Theorem \ref{thm:torus}. For example, results for one grid module from rat R are shown in \cref{fig:rat_r}, where a persistent cup-length-2 interval confirms the toroidal topology. Results for all analyzed modules are presented in Figures \ref{fig:rats1}–\ref{fig:rats9}. Each panel displays a persistence diagram alongside the corresponding cup product computation between the two most persistent 1-dimensional cocycles; the detected cup-length-2 interval, if detected, is also shown. We follow the naming convention of \cite{Gardner2022}, identifying each module by rat (R, Q, or S), module number (e.g., 1, 2, 3), and condition (OF, WW, REM, or SWS), with day numbers used to distinguish multiple sessions of the same module.

Several analyzed grid modules are of particular interest. For example, although modules Q2 in SWS (Figure \ref{fig:rats4}, middle panel) and Q2 in WW  (Figure \ref{fig:rats5}, bottom panel)  each exhibit two persistent 1-dimensional bars and one persistent 2-dimensional bar in their persistence diagrams consistent with toroidal topology, our cup length computation reveals a more nuanced picture. Specifically, the cup-length-2 interval is short, indicating that the toroidal structure, while present, persists only briefly in the filtration. This could suggest that the underlying topology is not that of a standard torus; it can instead, for example, come from a deformed torus—one that is compressed or "squashed" along one direction (see Section \ref{sec:deformed-torus} for a simulation).

In modules R1 in WW on day 1 (Figure \ref{fig:rats7}, middle panel) and R3 in OF on day 2 (Figure \ref{fig:rats7}, bottom panel), the persistence diagrams contain one persistent 0-dimensional bar, two persistent 1-dimensional bars, and one persistent 2-dimensional bar. However, the cup product computation indicates that these features do not generate a torus: the birth time of the 2-dimensional class occurs after the death time of the longest 1-dimensional class, thereby precluding a cup-length 2 structure arising from these classes.

Finally, we note that we also did not detect cup-length 2 in R1 SWS (day 2), R1 REM (day 2), R3 SWS (day 2), Q1 REM, Q2 REM, S1 WW, S1 REM, and S1 SWS modules (see Figures \ref{fig:rats6}-\ref{fig:rats9}). These failures may be attributed to low cell counts, heterogeneous module composition, or effects of our  subsampling procedure, as evidenced by noisy and less interpretable persistence diagrams. These modules will be examined in more detail in future work. 

For comparison, \cite{Gardner2022} reported a lack of consistency with toroidal topology only in module R1 during SWS, and module S1 during REM and SWS. In the latter two cases, the likely explanation is an insufficient number of recorded cells (72 cells), while the absence in R1 during SWS was attributed to heterogeneous cell composition. Notably, when analysis was restricted to the bursty (B) cell class in R1, a toroidal structure became apparent.

\begin{figure*}[!htb]
\begin{center}
\includegraphics[width=1\textwidth]{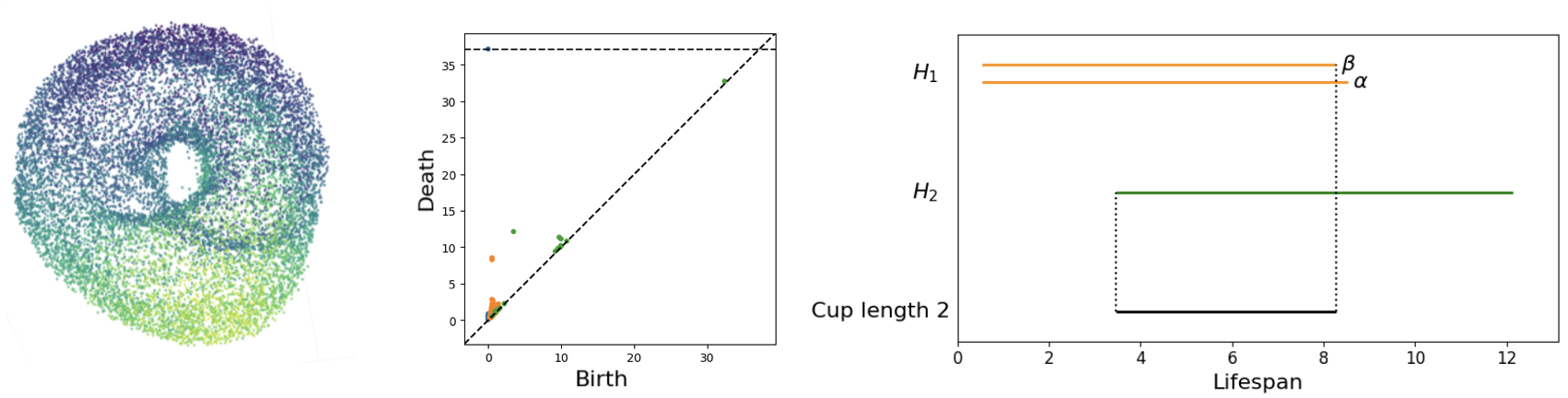}
\end{center}
\caption{ (Left) UMAP-based visualization of the population activity of a single grid module (149 grid cells, rat R, day 1, OF, module 2). Each dot represents the population state at one time point (dots colored by first principal component).  (Middle) Persistence diagram of the population activity. (Right) Cup-length computation showing two most persistent 1-dimensional bars (orange), the most persistent 2-dimensional bar (green), as well as the resulting cup-length-2 interval (black).}\label{fig:rat_r}
\label{fig1}
\end{figure*}

\section{Conclusion}\label{sec:conclusion}

We presented the first practical implementation of persistent cup-length, a cohomological invariant that overcomes key limitations of persistent homology in detecting toroidal structure. Unlike heuristic approaches, our method provides a rigorous, algebraic test for toroidal topology. Applied to both synthetic data and grid cell recordings, our method reliably detected toroidal components.  

Despite these promising results and applications, several limitations remain to be addressed in future work. Specifically, the computational complexity of the persistent cup-length algorithm, although polynomial, may be substantial for large datasets, requiring additional optimization and efficient landmark selection strategies. Additionally, the method's sensitivity to cocycle representative selection and filtration parameters necessitates careful preprocessing and parameter tuning for robust performance. At present, our implementation is restricted to computing cup-length 2; although the algorithm extends naturally in theory to higher cup lengths, overcoming computational challenges will be essential for generalizing the approach. Future studies will also explore extensions incorporating higher-order cohomological operations, such as Massey products \cite{flynnconnolly2024higherordermasseyproducts} and Steenrod squares \cite{medina2022per_st}.  

In addition, future work will investigate applying persistent cup-length to analyze dynamical regimes in recurrent neural networks (RNNs). By capturing nontrivial interactions among persistent cocycles, persistent cup-length has the potential to differentiate effectively between periodic, quasi-periodic, and chaotic dynamics within RNN hidden states. In particular, it may accurately identify toroidal attractors emerging from quasi-periodic signals \cite{gakhar2024sliding}, as seen in tasks like spatial navigation and path integration \cite{delmastro2024on,Gardner2022, NEURIPS2024_285b06e0}. Furthermore, extending this analysis to the evolution of RNN weight trajectories during training could yield deeper insights into learning dynamics and enhance model expressivity, aiding the design of neural network architectures optimized for capturing complex temporal dependencies \cite{birdal2021intrinsic,Sussillo2013OpeningTB,NEURIPS2019_5f5d4720}.

\section{Acknowledgements}

ESI thanks Jacob Zavatone-Veth and Andrew Yarmola for helpful discussions. GA acknowledges support from the Fannie and John Hertz Foundation. LZ thanks Jose Perea for helpful discussions and acknowledges support from the AMS-Simons travel grant. This material is based upon work supported by the National Science Foundation Graduate Research Fellowship Program under Grant Noch DGE 2140743. Any opinions,
findings, and conclusions or recommendations expressed in this material are those of the
author(s) and do not necessarily reflect the views of the National Science Foundation.

\bibliographystyle{plain}
\bibliography{bibliography}
 
%%%%%%%%%%%%%%%%%%%%%%%%%%%%%%%%%%%%%%%%%%%%%%%%%%%%%%%%%%%%
\newpage
\appendix

\section{Technical Appendices and Supplementary Material}

\subsection{Cup-length diagram and cup-length function}\label{sec:theory}

We provide background on the (persistent) cup-length diagram and the (persistent) cup-length function introduced in \cite{contessoto2022persistentcuplength}, and along the way recall the classical notion of cup-length.

\subsubsection{Cup product} Let $\X$ be a finite simplicial complex with totally ordered vertices $\{x_1 < \dots < x_n\}$. 
For a non-negative integer $p$, a \emph{$p$-simplex} is a list of $(p+1)$ distinct vertices in increasing order. Let $\X_p \subset \X$ denote the set of all $p$-simplices. Fix a filed $\field$. The space of $p$-chains, denoted $C_p(\X)$, is the $\field$-vector space spanned by $\X_p$.

A \emph{$p$-cochain} is a linear map $\sigma: C_p(\X) \to \field$. 
The space of $p$-cochains is denoted $C^p(\X) := \operatorname{Hom}(C_p(\X), \field)$. 
For each $p$-simplex $\alpha$, the \emph{dual cochain} $\alpha^* \in C^p(\X)$ is defined by $\alpha^*(\alpha) = 1$ and $\alpha^*(\tau) = 0$ for all $\tau \neq \alpha$. 
These $p$-cosimplices form a basis for $C^p(\X)$, so any $p$-cochain can be written as a finite linear combination of them

In our implementation, we work over the field $\field = \mathbb{Z}/2\mathbb{Z}$ and encode each $p$-simplex $\alpha = [x_{i_0}, \dots, x_{i_p}]$ by its index list $[i_0, \dots, i_p]$, following the total simplex ordering used in \cite{Bauer2021Ripser}. 

Since coefficients are binary, a $p$-cochain $\sigma$ can be written as a sum of cosimplices: 
\[
\sigma = \sum_{j=1}^h (\alpha^j)^*,
\]
where each $\alpha^j = [i_0^j, \dots, i_p^j]$ is a $p$-simplex. We encode such a cochain as a list of index sequences:
\[
\left[ [i_0^1, \dots, i_p^1], \dots, [i_0^h, \dots, i_p^h] \right].
\]
The integer $h$ is referred to as the \emph{size} of the cochain $\sigma$.

Given $p$- and $q$-simplices $\alpha = [\alpha_0, \dots, \alpha_p]$ and $\beta = [\beta_0, \dots, \beta_q]$, their duals $\alpha^*$ and $\beta^*$ are $p$- and $q$-cosimplices, respectively. The \emph{cup product} $\alpha^* \smile \beta^*$ is the cochain in $C^{p+q}(\X)$ defined on any $(p+q)$-simplex $\tau = [\tau_0, \dots, \tau_{p+q}]$ by
\[
\alpha^* \smile \beta^*(\tau) := \alpha^*([\tau_0, \dots, \tau_p]) \cdot \beta^*([\tau_p, \dots, \tau_{p+q}]).
\]
Equivalently, the result is the cosimplex $[\alpha_0, \dots, \alpha_p, \beta_1, \dots, \beta_q]^*$ if $\alpha_p = \beta_0$, and zero otherwise.

More generally, the cup product of a $p$-cochain $\sigma = \sum_{j=1}^h \lambda_j (\alpha^j)^*$ and a $q$-cochain $\sigma' = \sum_{j'=1}^{h'} \mu_{j'} (\beta^{j'})^*$ is defined bilinearly as
\[
\sigma \smile \sigma' := \sum_{j=1}^h \sum_{j'=1}^{h'} \lambda_j \mu_{j'} \left((\alpha^j)^* \smile (\beta^{j'})^*\right).
\]

Algorithm 1 in \cite{contessoto2022persistentcuplength} outlines the procedure for computing the cup product of two cochains over $\mathbb{Z}/2\mathbb{Z}$.

\subsubsection{Cohomology ring and cup-length} 
\label{subsub:cohomology and cup length}
The cochain spaces $C^p(\X)$ are connected by a sequence of linear maps $\delta: C^p(\X) \to C^{p+1}(\X)$ called the \emph{coboundary operators}. 
The kernel of $\delta$ consists of \emph{cocycles}, and the image of $\delta$ consists of \emph{coboundaries}. The $p$-th cohomology group is defined as the quotient
\[
\rmH^p(\X) := \ker(\delta: C^p(\X) \to C^{p+1}(\X)) \, \big/ \, \operatorname{im}(\delta: C^{p-1}(\X) \to C^p(\X)).
\]
%Each element of $\rmH^p(\X)$ is an equivalence class of cocycles differing by a coboundary.

The cup product operation induces a bilinear map 
\[
\smile: \rmH^p(\X) \times \rmH^q(\X) \to \rmH^{p+q}(\X),
\]
which equips the total cohomology vector space 
\[
\rmH^*(\X) := \bigoplus_{p \geq 0} \rmH^p(\X)
\]
with the structure of a graded ring.
A fundamental invariant of this ring is the \emph{cup-length}, defined as the largest integer $\ell$ such that there exist cohomology classes $\eta_1, \dots, \eta_\ell$ of positive degree with 
\(
\eta_1 \smile \cdots \smile \eta_\ell \neq 0.
\) 
We denote the cup-length of $\X$ by $\cupprod(\X)$.

\subsubsection{Cup-length diagram and cup-length function}
\label{subsub:persistent cup-length}

A \emph{filtration} $\Xfunc = \{\X_t\}_{t \geq 0}$ is a nested sequence of simplicial complexes:
\[
\X_{t_1} \subseteq \X_{t_2} \subseteq \cdots \subseteq \X_{t_k}, \quad \text{for } t_1 < t_2 < \cdots < t_k.
\]
Applying the cohomology ring functor to this sequence yields a diagram of graded rings (and graded vector spaces) connected by maps induced by the inclusions:
\[
\Hfunc^*(\X_{t_1}) \leftarrow \Hfunc^*(\X_{t_2}) \leftarrow \cdots \leftarrow \Hfunc^*(\X_{t_k}),
\]
forming the \emph{persistent cohomology ring} $\Hfunc^*(\Xfunc)$. This structure captures not only how cohomology classes evolve across the filtration, but also how their cup products persist over time.

In each fixed degree \( p \), the corresponding diagram of cohomology vector spaces
\[
\Hfunc^p(\X_{t_1}) \to \Hfunc^p(\X_{t_2}) \to \cdots \to \Hfunc^p(\X_{t_k})
\]
defines the \emph{persistent cohomology} in degree \( p \), which tracks the evolution of cohomology classes across the filtration. A class is said to be \emph{born} at time $t_i$ if it appears in $\Hfunc^p(\X_{t_i})$ but not before, and it \emph{dies} at time $t_j$ if it becomes trivial in $\Hfunc^p(\X_{t_j})$ and remains so afterward. Each such class corresponds to an interval $[t_i, t_j)$ in the \emph{barcode} $\caB_p(\Xfunc)$ of degree $p$.
For each such interval $[b,d)$, we select a representative cocycle $\sigma_I \in C^p(\X_{d - \epsilon})$ (for small $\epsilon > 0$) that generates the corresponding class. 

Given a family $\bsigma = \{\sigma_I\}_{I \in \caB_{\geq 1}(\Xfunc)}$ of representative cocycles, the \emph{(persistent) cup-length diagram (accosiated to $\bsigma$)} $\mathbf{dgm}_{\bsigma}^{\smile}(\Xfunc): \Int \to \mathbb{N}$ records, for each interval $I$, the maximal number $\ell$ such that a nontrivial $\ell$-fold product $\sigma_{I_1} \smile \cdots \smile \sigma_{I_\ell}$, with $\sigma_{I_j} \in \bsigma$, persists over $I$. 
Here, we denote by $\Int$ a fixed collection of intervals of the same type (e.g., open-open, closed-open, etc.). When results apply to all types, we write intervals as $\langle a, b \rangle$ for notational simplicity.

\begin{rmk}\label{rmk:cup length diagram doesnt detect toroidal topology}
    The presence of a toroidal component in a space cannot be determined from the cup-length diagram alone. We must also know whether a given cup-length-2 interval arises from the cup product of two linearly independent 1-cocycles. This information is available from intermediate steps in our persistent cup-length algorithm.
\end{rmk}

The \emph{(persistent) cup-length function} $\cupprod(\Xfunc): \Int \to \mathbb{N}$ assigns to each interval $\langle t, s\rangle$ the cup-length of the image ring $\mathrm{Im}(\Hfunc^*(\X_s) \to \Hfunc^*(\X_t))$, interpreted as a graded subring of $\Hfunc^*(\X_t)$. This function reflects the maximum number of persistent cohomology classes that can cup to a nonzero product over the interval.
By \cite[Theorem 1]{contessoto2022persistentcuplength}, the cup-length function can be recovered from the cup-length diagram by taking the maximum over all intervals containing the given one:
for any $\langle a,b\rangle\in \Int$,
\begin{equation}\label{eq:tropical_mobuis}
    \cupprod(\Xfunc)(\langle a,b\rangle)=  \max_{\langle c,d\rangle\supseteq \langle a,b\rangle}\mathbf{dgm}_{\bsigma}^{\smile}(\Xfunc)(\langle c,d\rangle). 
\end{equation}

See \cref{fig:cup-length diagram and function} for an example plot of the cup-length diagram and cup-length function.

\begin{rmk}\label{rmk:independent of representative}
    It is clear that the cup-length function is independent of the choice of representative cocycles, whereas the cup-length diagram generally depends on this choice, as explained in \cite[Example 18]{contessoto2022persistentcuplength}. However, for the torus $T^2$, the cup-length diagram remains unchanged under different choices of representatives. 

    To see this, let $\sigma$ and $\sigma'$ be two generators of $\rmH^1(T^2)$ such that $\sigma \smile \sigma'$ generates $\rmH^2(T^2)$. The degree-one cocycles can be chosen as any of the following pairs: $\{\sigma, \sigma'\}$, $\{\sigma, \sigma+\sigma'\}$, $\{\sigma', \sigma+\sigma'\}$, or $\{\sigma+\sigma', \sigma+\sigma'\}$. In all cases, the resulting cup-length diagrams are the same. 
\end{rmk}

We recall the following property of the cup-length function.
\begin{prop}[{\cite[Proposition 11]{contessoto2022persistentcuplength}}]
    \label{prop11}
If $\Xfunc,\Yfunc:(\mathbb{R},\leq)\to\Top$ are two persistent spaces. Then:
\begin{itemize}
    \item $\cupprod\left(\Xfunc\times\Yfunc\right)=\cupprod(\Xfunc)+\cupprod(\Yfunc),\text{  }$
    \item $\cupprod\left(\Xfunc\amalg\Yfunc\right)=\max\{\cupprod(\Xfunc),\cupprod(\Yfunc)\},\text{ and }$
    \item $\cupprod\left(\Xfunc\vee\Yfunc\right)=\max\{\cupprod(\Xfunc),\cupprod(\Yfunc)\}$.
\end{itemize}
Here $\times,\amalg$ and $\vee$ denote point-wise product, disjoint union, and wedge sum, respectively.
\end{prop}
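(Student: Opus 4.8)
The plan is to reduce the three functional identities to a single pointwise statement about image rings, and then to split the argument into a purely algebraic part (how the cup-length of a graded-commutative $\field$-algebra behaves under tensor product and finite product) and a topological part (identifying the relevant image ring via Künneth and via the splitting of cohomology over disjoint unions and wedges). Throughout I work over $\field=\mathbb{Z}/2\mathbb{Z}$, so the Künneth formula holds with no Tor correction terms and all cohomology rings are graded-commutative.

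First I would fix an interval $\langle a,b\rangle\in\Int$ with $a\le b$. By the definition of the cup-length function, the value $\cupprod(\Xfunc)(\langle a,b\rangle)$ is the cup-length of the graded image ring
\[
G_\Xfunc := \im\bigl(\Hfunc^*(\X_b)\to\Hfunc^*(\X_a)\bigr),
\]
regarded as a graded subring of $\Hfunc^*(\X_a)$. It therefore suffices to establish, for each such interval, the three identities relating the cup-length of $G_{\Xfunc\times\Yfunc}$, $G_{\Xfunc\amalg\Yfunc}$, and $G_{\Xfunc\vee\Yfunc}$ to those of $G_\Xfunc$ and $G_\Yfunc$; taking the resulting equalities over all intervals yields the functional identities. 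For this I record two algebraic facts, both for connected graded-commutative $\field$-algebras $R,S$: writing $\bar R=\bigoplus_{p>0}R^p$ for the augmentation ideal, the cup-length $\ell(R)$ equals the largest integer with $\bar R^{\,\ell(R)}\neq 0$, and one has $\ell(R\otimes S)=\ell(R)+\ell(S)$ as well as $\ell(R\times S)=\max\{\ell(R),\ell(S)\}$.

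Next I would treat the three cases by identifying the image ring. For the product, pointwise evaluation gives $(\Xfunc\times\Yfunc)_t=\X_t\times Y_t$, and the Künneth isomorphism $\Hfunc^*(\X_t\times Y_t)\cong\Hfunc^*(\X_t)\otimes\Hfunc^*(Y_t)$ is a \emph{natural} isomorphism of graded rings; naturality with respect to the inclusion $\X_b\times Y_b\hookrightarrow\X_a\times Y_a$ lets me identify $G_{\Xfunc\times\Yfunc}\cong G_\Xfunc\otimes G_\Yfunc$, whence the first algebraic fact gives the additive identity. For the disjoint union, $(\Xfunc\amalg\Yfunc)_t=\X_t\amalg Y_t$ and in each positive degree $\Hfunc^p(\X_t\amalg Y_t)=\Hfunc^p(\X_t)\oplus\Hfunc^p(Y_t)$ with the product-ring structure and vanishing cross cup products; the image ring is then the product $G_\Xfunc\times G_\Yfunc$ in positive degrees, and the second fact gives the maximum. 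For the wedge, I would pass to reduced cohomology and use the natural ring splitting $\tilde\Hfunc^*(\X_t\vee Y_t)\cong\tilde\Hfunc^*(\X_t)\oplus\tilde\Hfunc^*(Y_t)$, in which the cup product of a class pulled from $\X_t$ with one pulled from $Y_t$ vanishes because the two classes have disjoint supports away from the basepoint; on positive degrees this is again a product-ring structure, so the image ring splits as a product and the maximum identity follows as before.

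The step I expect to be the main obstacle is the tensor-product identity $\ell(R\otimes S)=\ell(R)+\ell(S)$ together with its compatibility with the filtration. The inequality ``$\geq$'' is immediate, since tensoring a nonzero $\ell(R)$-fold product from $R$ with a nonzero $\ell(S)$-fold product from $S$ yields a nonzero $(\ell(R)+\ell(S))$-fold product; but ``$\leq$'' requires the augmentation-ideal computation $\overline{R\otimes S}^{\,n}=\sum_{i+j=n}\bar R^{\,i}\otimes\bar S^{\,j}$, valid because $\field$ is a field so the tensor decomposition is exact, which caps the length at $\ell(R)+\ell(S)$. The genuinely persistent subtlety is ensuring that the Künneth isomorphism intertwines the two filtration-induced maps, so that the image ring is \emph{literally} the tensor product $G_\Xfunc\otimes G_\Yfunc$ rather than merely abstractly isomorphic degreewise; this is where the naturality of Künneth as a transformation of functors on spaces must be invoked carefully, and the analogous naturality of the disjoint-union and wedge splittings disposes of the remaining two cases.
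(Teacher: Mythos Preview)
The paper does not supply its own proof of this proposition; it is simply recalled from \cite{contessoto2022persistentcuplength} without argument. Your proposal is therefore being compared against a citation, not a proof, and on its own merits it is correct and follows the natural line: reduce to a pointwise statement about the image ring over each interval, then combine an algebraic lemma on cup-length under $\otimes$ and $\times$ with the topological identifications coming from K\"unneth and from the cohomology splittings for $\amalg$ and $\vee$.

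Two small points are worth tightening. First, your algebraic lemmas are stated for \emph{connected} graded algebras, but the slices $\X_t$, $Y_t$ of a persistent space need not be connected, so $G_\Xfunc^0$ may be larger than $\field$. This is harmless: your own monomial-expansion argument for the upper bound $\ell(R\otimes S)\le\ell(R)+\ell(S)$ goes through without connectedness, since in any term $(r_1\cdots r_n)\otimes(s_1\cdots s_n)$ the count of positive-degree $r_k$'s plus the count of positive-degree $s_k$'s is at least $n$, and once either count exceeds the corresponding cup-length the factor vanishes (degree-zero elements can only kill, never revive, a product). The identity $\overline{R\otimes S}^{\,n}=\sum_{i+j=n}\bar R^{\,i}\otimes\bar S^{\,j}$ as you wrote it is literally a connected-case statement, but the counting argument behind it is what you actually need. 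Second, the K\"unneth ring isomorphism in cohomology requires a finite-type hypothesis on at least one factor; this is not explicit in the proposition as stated, but holds in every instance the paper cares about (finite simplicial complexes from Vietoris--Rips filtrations), and you should flag it as an implicit standing assumption. With those two caveats, your plan is complete.
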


\begin{figure}
    \centering
    \includegraphics[width=1\linewidth]{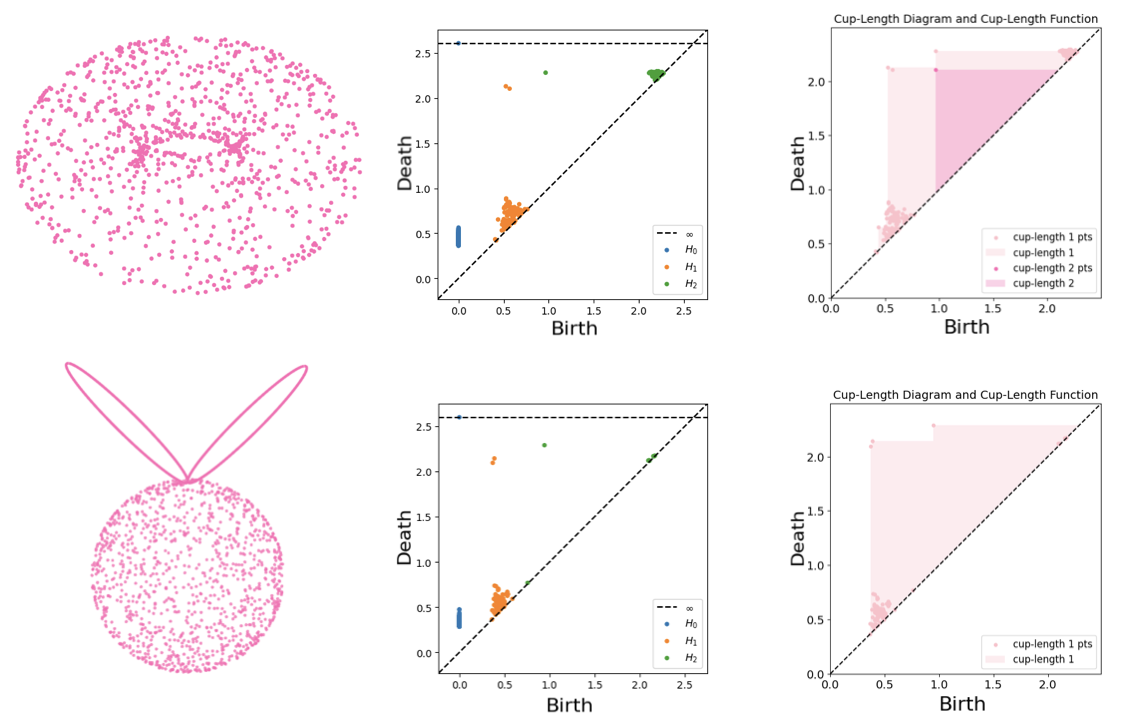}
    \caption{
    Computation of the cup-length diagram and function for the point clouds in \cref{fig:torus-vs-bunny-persistent}. The rightmost column shows a joint visualization of the cup-length diagram (colored points) and the cup-length function (shaded background), where darker colors indicate higher cup-length values. Cup-length-1 points, shown in light pink, correspond to 1- and 2-dimensional  features in the persistence diagram, shown in the middle. The cup-length-2 point, shown in dark pink, represents the cup-length-2 interval detected by our algorithm.     }
    \label{fig:cup-length diagram and function}
\end{figure}

\subsection{Algorithm's theory and implementation}

\subsubsection{Coefficient choices}

While our theorem is stated over \(\mathbb{Z}\) for generality, we compute with \(\mathbb{Z}/2\mathbb{Z}\) coefficients in practice. This choice is standard in computational persistent homology due to several technical advantages. Specifically, computations over \(\mathbb{Z}/2\mathbb{Z}\) avoid complications arising from torsion elements and simplify algebraic operations, resulting in efficient algorithms. 

The Universal Coefficient Theorem (see Section 3.A in \cite{hatcher2000}) justifies this practice, stating that cohomology groups with field coefficients (in particular, \(\mathbb{Z}/2\mathbb{Z}\)) completely capture the rank information of integral cohomology, although torsion information is lost. 

For our application—detecting toroidal structure via nonzero cup products in \(\mathrm{H}^2(X)\)—this simplification is justified because orientability of the simplicial complexes ensures the existence of globally consistent cohomology classes. Hence, the nontriviality of cup products over \(\mathbb{Z}/2\mathbb{Z}\) remains indicative of genuine topological features, such as the presence of toroidal components.

\subsubsection{Algorithm modification}
\label{sec:alg-main}

As mentioned in the main body, we revised the persistent cup-length algorithm from \cite{contessoto2022persistentcuplength} to correct several implementation-level inaccuracies and improve computational reliability. Here, we provide the details.

\cref{alg:cup_product}, as in \cite[Algorithm 1]{contessoto2022persistentcuplength}, computes the cup product of two cochains at the cochain level. For completeness, we restate it here without modification, as it serves as a foundational component of the persistent cup-length pipeline.

\begin{algorithm}[H] \label{algorithm_cup_product}
\SetKwData{Left}{left}\SetKwData{This}{this}\SetKwData{Up}{up}
\SetKwInOut{Input}{Input}\SetKwInOut{Output}{Output}
\Input{Two cochains $\sigma_1$ and $\sigma_2$, and the simplicial complex $\X$.}
\Output{The cup product $\sigma=\sigma_1\smile \sigma_2$, at cochain level.}
\BlankLine
$\sigma\gets [\,]$\;
%$k_1,k_2\gets \dim(\sigma_1),\dim(\sigma_2)$\;
\If{$\dim(\sigma_1)+\dim(\sigma_2)\leq \dim(\X)$}
    {\For%(\tcp*[f]{$O(\mathrm{size}(\sigma_1)\cdot\mathrm{size}(\sigma_2))$})
    {$i \leq \mathrm{size}(\sigma_1)$ and $j \leq \mathrm{size}(\sigma_2)$ }{
            $a\gets \sigma_1(i)$ and $b\gets \sigma_2(j)$\;
            \If{$a[\mathrm{end}]== b[\mathrm{first}]$}{
                $c \gets a.\mathrm{append}(b[\mathrm{second}:\mathrm{end}])$\;
                \If%(\tcp*[f]{$O(\card(\X_{\dim(\sigma_1)+\dim(\sigma_2)}))$})
                {$c\in \X_{\dim(\sigma_1)+\dim(\sigma_2)}$ \label{line:last-if-cup}}{
                     Append $c$ to $\sigma$\;
                     }
                }
            }
    }
\Return $\sigma$.
\caption{$\mathrm{CupProduct}(\sigma_1,\sigma_2,\X)$.}
\label{alg:cup_product}
\end{algorithm}

The persistent cup-length algorithm, summarized in \cref{alg:main} in the case of a simplex-wise filtration, computes the \emph{persistent cup-length matrix} $A_\ell$ for a given input filtration.  
This matrix is indexed by birth times (rows) and death times (columns), with each entry representing the cup-length value (no larger than $\ell$) corresponding to a specific birth–death interval.

Below, we summarize our corrections and modification of \cite[Algorithm 2]{contessoto2022persistentcuplength}, where all line numbers refer to \cref{alg:main}.

\paragraph{Refinements to the simplex-wise version of the alogithm.} Our modified version of \cite[Algorithm 2]{contessoto2022persistentcuplength}, presented in \cref{alg:main}, incorporates several key improvements:

\begin{itemize}
\item For clarity, \cref{alg:main} assumes simplex-wise filtrations with finite death times. However, our practical implementation also addresses scenarios involving infinite death times.
\item \textbf{Lines 1–2:} The original definitions of $\mathrm{b_{time}}$ and $\mathrm{d_{time}}$ were tailored specifically to Vietoris-Rips filtrations. We generalized these definitions by tracking all filtration indices, allowing the algorithm to handle general simplex-wise filtrations.
\item \textbf{Lines 5 and 32:} We introduced a data structure previously missing from the original algorithm to store intermediate $\ell$-fold cup products computed throughout iterations of the outer “while” loop.
\item \textbf{Lines 6–9:} We corrected an indexing error present in the original algorithm. Instead of iterating over pairs $(b_i, b_j)$, we iterate correctly over pairs of birth and death times, updating the corresponding entry in $A_1$ to reflect a cup length of 1 when the pair is found in $B_1$.
\item \textbf{Line 13:} We added the initialization step for $A_{\ell+1}$, which was omitted from the original outer “while” loop.
\item \textbf{Line 17:} We included an explicit check for triviality of the vector representation of the resulting cup product. If this vector is trivial, we proceed immediately to the next pair of barcodes, optimizing computational efficiency.
\item \textbf{Lines 19–21:} We implemented a necessary coboundary condition check at filtration level $d_{\min}$ prior to iterating backward through earlier birth times. 
\item \textbf{Line 23:} To ensure accurate  cup length interval computation, we halt the backward iteration upon reaching the smallest possible birth time.
\item \textbf{Line 27:} If the resulting cup product becomes trivial at a certain filtration time during backward iteration, we record this point as the left endpoint of the cup-length interval and terminate the search.
\item \textbf{Line 31:} We corrected the update rule to $A_{\ell+1}(i,\min\{j_1,j_2\}) = \ell + 1$ (vs. $\ell$ in the original algorithm) when a nontrivial cup product is found. 
\end{itemize}

\paragraph{Refinements to the Vietoris–Rips version of the alogithm.} We adapt \cref{alg:main} specifically for the Vietoris–Rips filtration. Since this version is structurally similar to the simplex-wise case, we omit its pseudocode. In addition to the improvements listed above, we introduce the following modifications specific to the Vietoris–Rips setting:
\begin{itemize}
    \item We incorporated bar thresholding to facilitate the selection of persistent features with longer lifespans as needed.
    \item \textbf{Line 1}: For the purpose of detecting toroidal structure, where only dimension-1 and dimension-2 bars are used, we initialize birth times for dimension 2 only.
    \item \textbf{Line 14}: We avoid redundant computations by not repeating both $\alpha \smile \beta$ and $\beta \smile \alpha$. 
    \item \textbf{Lines 19 and 22}: Our approach modifies this step by \textbf{dynamically reconstructing the coboundary matrix}, rather than using the originally proposed reduced-matrix formulation. This adaptation improves compatibility with existing TDA software—particularly Ripser, which we utilize—as these packages are optimized for computational and space efficiency and do not store intermediate reduced matrices.
\end{itemize}

\begin{algorithm}
\SetKwData{Left}{left}\SetKwData{This}{this}\SetKwData{Up}{up}
\SetKwInOut{Input}{Input}\SetKwInOut{Output}{Output}
\Input{ A dimension bound $k$, the ordered list of cosimplices $S^*$ from dimension \textcolor{red}{$0$} to \textcolor{red}{$k$}, \textcolor{red}{coboundary matrix $R$ from dimension $0$ to $k$}, and barcodes (annotated by representative cocycles) from dimension $1$ to $k$: $\mathcal{B}_{[1,k]}=\{(b_\sigma,d_\sigma,\sigma)\}_{\sigma\in \bsigma}$, where each $ \sigma$ is a representative cocycle for the bar $(b_\sigma,d_\sigma)$ and $\{ \sigma_1,\dots, \sigma_{q_1}\}$ is ordered first in the increasing order of the death time and then in the increasing order of the birth time.} 
\Output{ A cup-length matrix $A_\ell$} 
\BlankLine
$\mathrm{b\_time}\gets \textcolor{red}{[0, 1, 2, ..., |S^*| - 1]}$\;
$\mathrm{d\_time}\gets \textcolor{red}{[0, 1, 2, ..., |S^*| - 1]}$\;
%$ \textcolor{red}{\text{Append } \text{ inf to } \mathrm{d\_time}}$\; 
$ m_\udim,\, \ell,\, B_1 \gets \mathrm{card}(S^*),\, 1,\,\mathcal{B}_{[1,\udim]}$\;
$A_0=A_1\gets \mathrm{zeros}(\mathrm{card}(\mathrm{b\_time}), \mathrm{card}(\mathrm{d\_time}))$\;
$ \textcolor{red}{B = [[], B_1]}$ \tcp*{each $B_\ell$ stores the $\ell$-fold cup products}

\For{$i = 0,\dots,\card(\mathrm{b\_time})-1$}
{
  \For{$j = 0,\dots,\card(\mathrm{d\_time})-1$}{
    \If{$(\mathrm{b\_time}[i],\mathrm{d\_time}[j])\in B_1$}{
      $A_1[i,j]\gets 1$
    }
  }
}
$ \textcolor{red}{A = [A_0, A_1]}$\;

\While(\tcp*[f]{$O(\udim)$}){$A[\ell-1]\neq A[ \ell ]$ and $l\leq k-1$ \label{line:while}}
{
    $B_{\ell+1}=[]$\; 
    %$ \textcolor{red}{\mathrm{B.append([])}}$\;

    $\textcolor{red}{\text{Append a copy of } A[\ell] \text{ to } A} $ \tcp*{Initialize $A_{\ell+1}$ as $A_\ell$}
    
    \For(\tcp*[f]{$O( q_1\cdot q_{k-1})$})
    {
        $(b_{i_1},d_{j_1}, \sigma_1)\in B_1$ \text{and} $(b_{i_2},d_{j_2}, \sigma_2)\in B[\ell]$ \label{line:while-for}
    }{
        $\sigma\gets \mathrm{CupProduct}( \sigma_1, \sigma_2,S^*)$ \tcp*{$O( m_\udim ^2\cdot c_\udim)$, \cref{alg:cup_product}}
        $y\gets$ the vector representation of $\sigma$ in $S^*$\;
        \If{$\textcolor{red}{y\neq 0}$}
        { 
        $d_{\min} \gets \mathrm{d\_time}[\min(j_1, j_2)]$\;
            % \If{\textcolor{red}{$\mathrm{d\_time}[\min(j_1, j_2)] \neq \mathrm{inf}$}}
            % {
            % \textcolor{red}{$d_{\min} = \mathrm{d\_time}[\min(j_1, j_2)]}$}
            % }
            \If(\tcp*[f]{$O(d^3_{\min} )$})
            {
                $\textcolor{blue}{\exists x \text{ s.t. } R[:d_{\min}, :d_{\min}]x = y[:d_{\min}]}$
            }{
                $\textcolor{red}{i\gets \max\{i': \mathrm{b\_time}[i']\leq d_{\min}\}}$\;
                $\textcolor{red}{s_i\gets }$ \textcolor{red}{number of simplices  with birth time } $\textcolor{red}{\leq \mathrm{b\_time}[i]}$
            
            \While(\tcp*[f]{$O(s^3_{i} )$})
            {
                $\textcolor{blue}{\exists x \text{ s.t. } R[:s_i, :s_i]x = y[:s_i]}$ \label{line:while-while}
            }{
                \If{$\textcolor{red}{i = 0}$}
                {
                    break
                }
                $i\gets i-1$\;
                $s_i\gets $ number of simplices with birth time $\leq \mathrm{b\_time}[i]$\;
                \If{$\textcolor{red}{y[:s_i] == 0}$}
                {
                    break
                }
            }
            \If{$\mathrm{b\_time}[i] < d_{\min}$}
            {
                Append $\left(\mathrm{b\_time}[i],\,d_{\min},\,\sigma\right)$ to $B_{\ell+1}$\;
                $A[\ell+1](i,\min\{j_1,j_2\})\gets \textcolor{red}{\ell + 1}$ \tcp*{Update $A_{\ell+1}$}
     
                $ \textcolor{red}{\text{Append } B_{\ell+1} \text{ to } B}$
            }} 
        }
    }
    $\ell\gets \ell+1$\; 
}
\Return~$A_\ell$.
\caption{Modified and improved version of \cite[Algorithm 2]{contessoto2022persistentcuplength}. The algorithm computes persistent cup-length matrix of a simplex-wise filtration.
Red and blue text highlight corrections and improvements, where blue text specifically marks the key step: checking whether a cocycle becomes a coboundary at a given filtration level. As in \cite{contessoto2022persistentcuplength}, we use the following notation when estimating the complexity of the algorithm: let $m_k$ denote the number of positive-dimensional simplices in the $(k+1)$-skeleton of the filtration; let $c_k$ represent the cost of checking whether a simplex is alive at a given filtration parameter; and $q_{k-1}$ is the maximum cardinality of $\ell$-fold products of elements from $\caB_{[1,k]}$, for $1 \leq \ell \leq k-1$. In particular, $q_1$ is the cardinality of $\caB_{[1,k]}$. 
In practice, since computations are over $\mathbb{Z}/2\mathbb{Z}$ and the coboundary matrix is sparse, Lines 19 and 22 are much faster than their worst-case cubic complexity.
Also note that on line 14, $i_1$ and $i_2$ index into $\text{b\_time}$, while $j_1$ and $j_2$ index into  $\text{d\_time}$.
}
\label{alg:main}
\end{algorithm}
 
\subsection{Revealing toroidal organization in grid cell populations: persistent cup-length analysis}\label{sec:grid-cell-analysis}
 
 Results for all analyzed grid cell modules are presented in Figures \ref{fig:rats1}–\ref{fig:rats9}. Each panel displays a persistence diagram alongside the corresponding cup product computation between the two most persistent 1-dimensional cocycles; the detected cup-length-2 interval, if detected, is also shown. We follow the naming convention of \cite{Gardner2022}, identifying each module by rat (R, Q, or S), module number (e.g., 1, 2, 3), and condition (OF, WW, REM, or SWS), with day numbers used to distinguish multiple sessions of the same module.

%In our analysis, module S1 did not exhibit cup length 2 in REM, SWS, or WW; module R1 lacked cup length 2 in REM, SWS, and WW as well; and module R3 did not show cup length 2 in SWS or OF. Additionally, modules Q1 and Q2 in rat Q did not exhibit cup length 2 during REM.

 \begin{figure}
  \centering
  \includegraphics[width=1\textwidth]{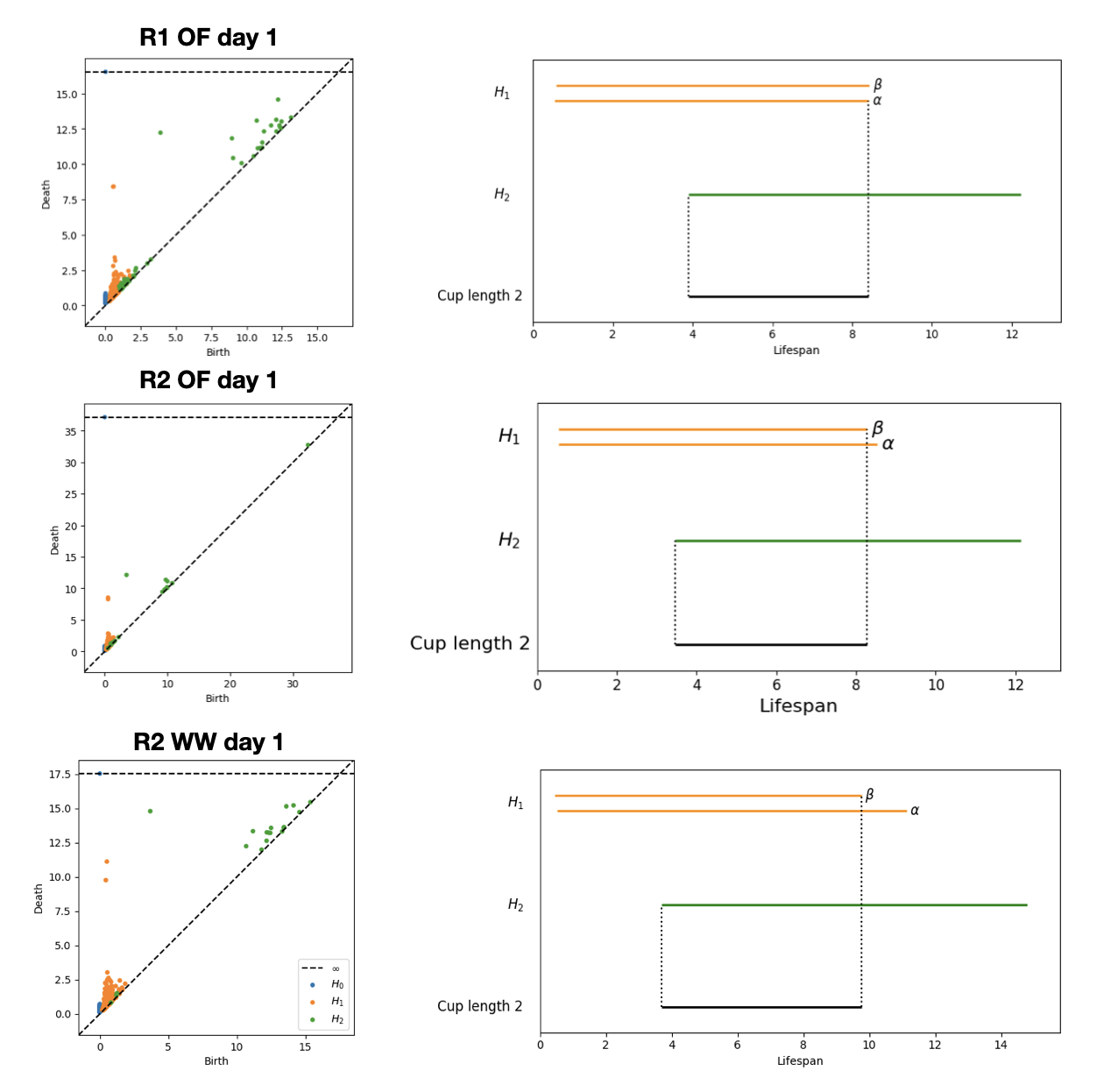}
  \caption{Persistence diagram (left) and cup-length computation (right) for R1 OF day 1, R2 OF day 1, and R2 WW day 1 modules. Persistence is computed using 500 landmarks. The cup-length computation shows two most persistent 1-dimensional bars (orange), the most persistent 2-dimensional bar (green), as well as the resulting cup-length-2 interval (black).}
  \label{fig:rats1}
\end{figure}

 \begin{figure}
  \centering
  \includegraphics[width=1\textwidth]{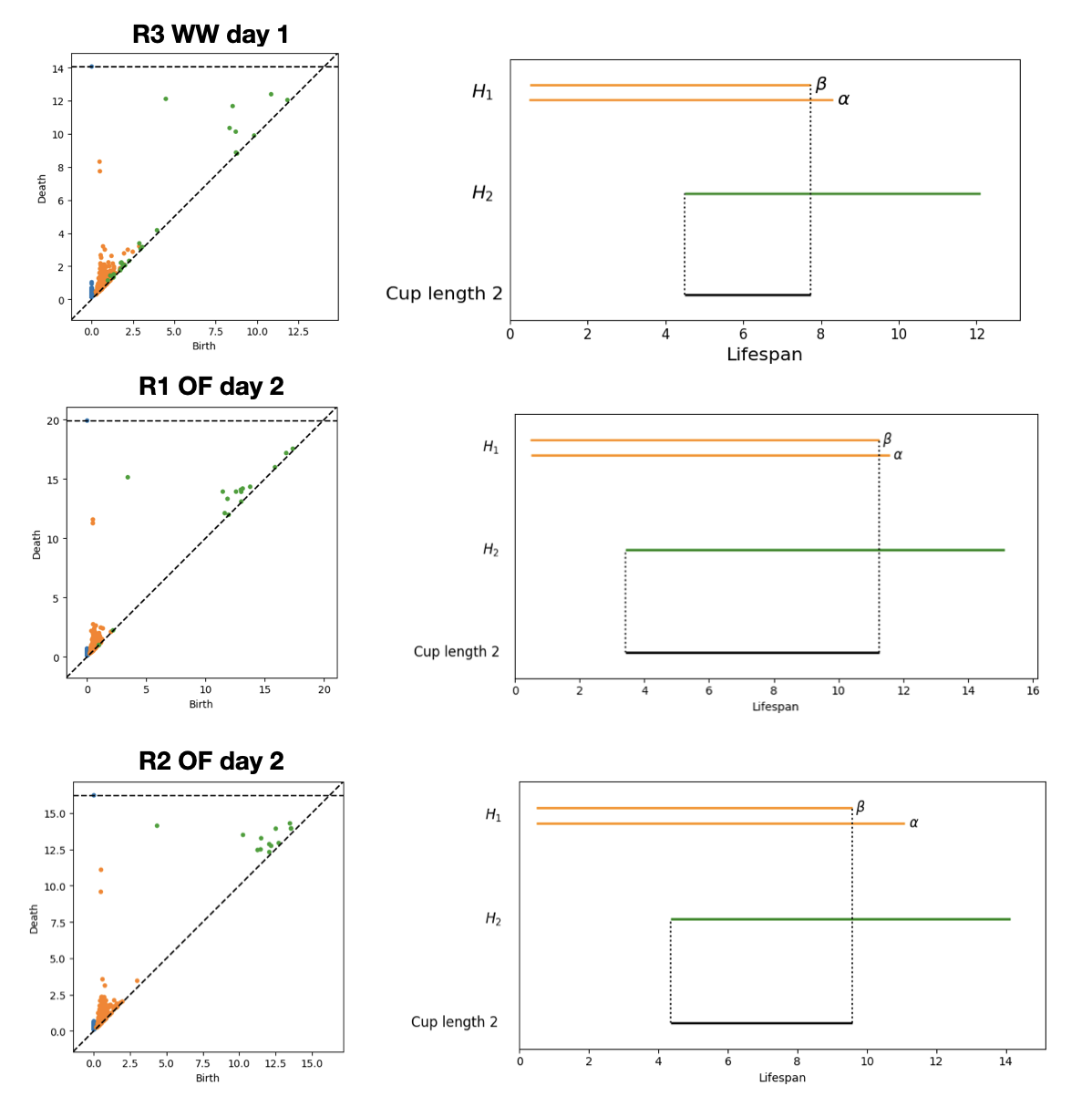}
  \caption{Persistence diagram (left) and cup-length computation (right) for R3 WW day 1, R1 OF day 2, and R2 OF day 2 modules. Persistence is computed using 500 landmarks. The cup-length computation shows two most persistent 1-dimensional bars (orange), the most persistent 2-dimensional bar (green), as well as the resulting cup-length-2-interval (black).}
  \label{fig:rats2}
\end{figure}

 \begin{figure}
  \centering
  \includegraphics[width=1\textwidth]{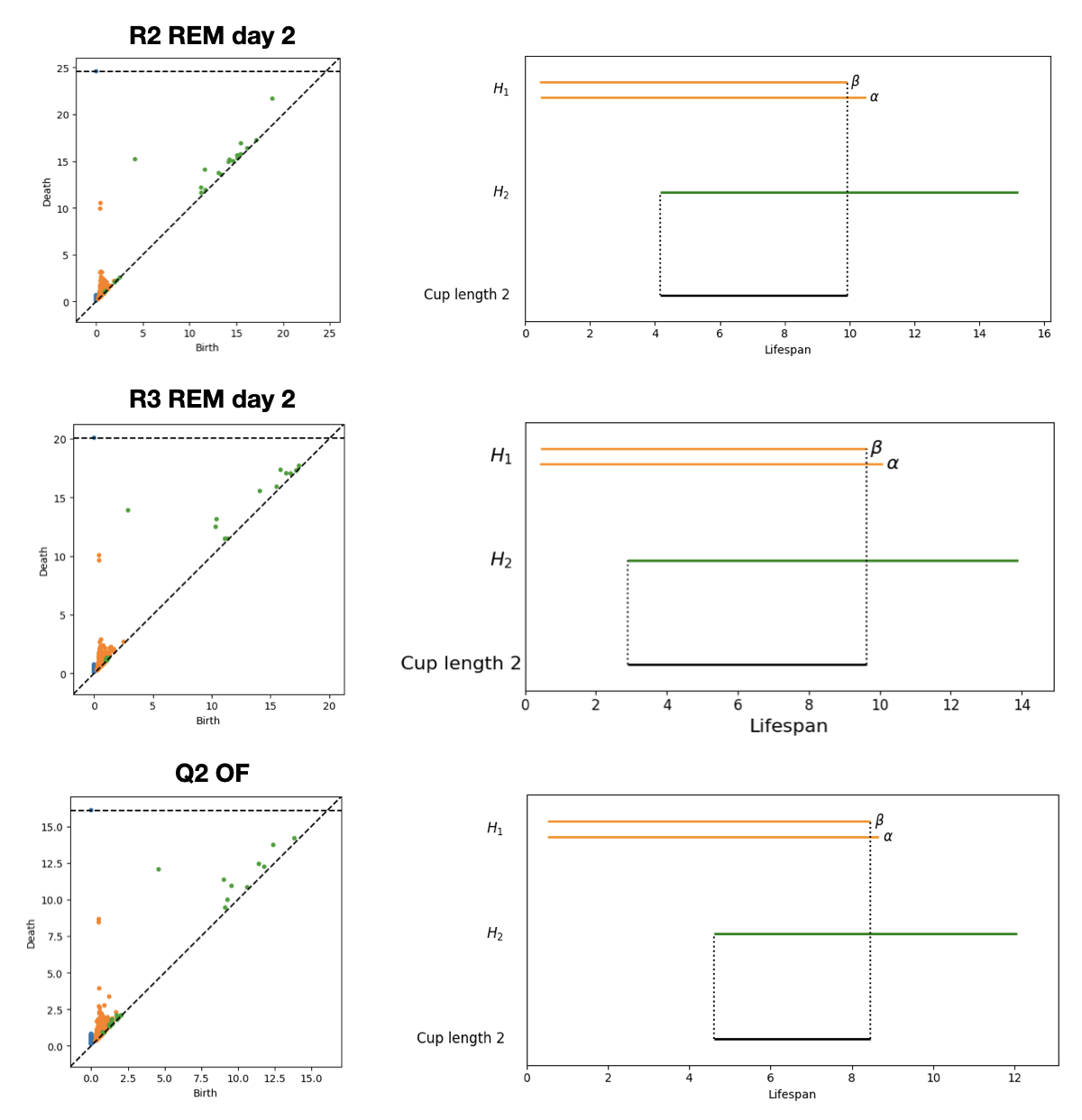}
  \caption{Persistence diagram (left) and cup-length computation (right) for R2 REM day 2, R3 REM day 2, and Q2 OF modules. Persistence is computed using 500 landmarks. The cup-length computation shows two most persistent 1-dimensional bars (orange), the most persistent 2-dimensional bar (green), as well as the resulting cup-length-2 interval (black).}
  \label{fig:rats3}
\end{figure}

 \begin{figure}
  \centering
  \includegraphics[width=1\textwidth]{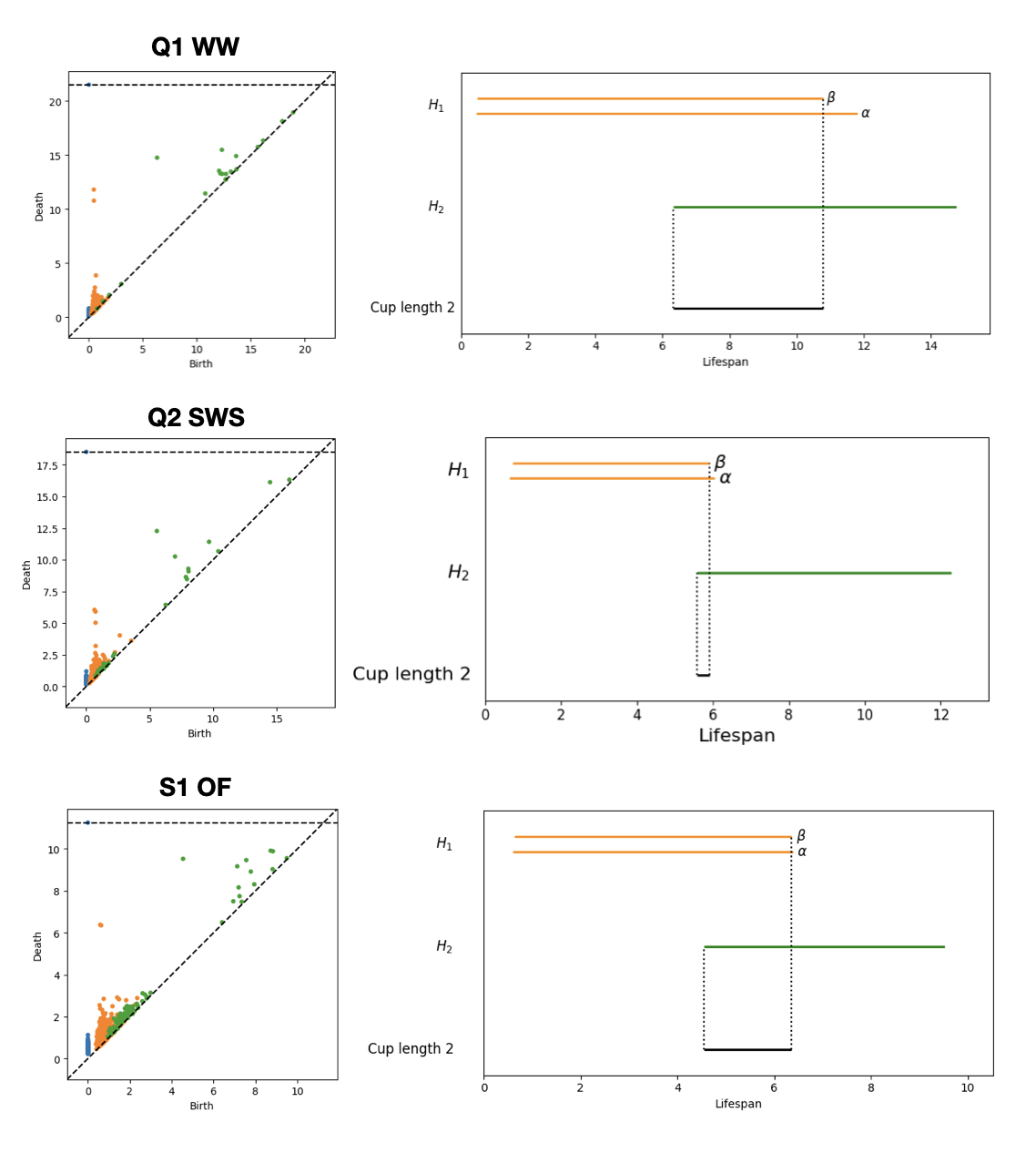}
  \caption{Persistence diagram (left) and cup-length computation (right) for Q1 WW, Q2 SWS, and S1 OF modules. Persistence is computed using 500 landmarks. The cup-length computation shows two most persistent 1-dimensional bars (orange), the most persistent 2-dimensional bar (green), as well as the resulting cup-length-2 interval (black).}
  \label{fig:rats4}
\end{figure}

 \begin{figure}
  \centering
  \includegraphics[width=1\textwidth]{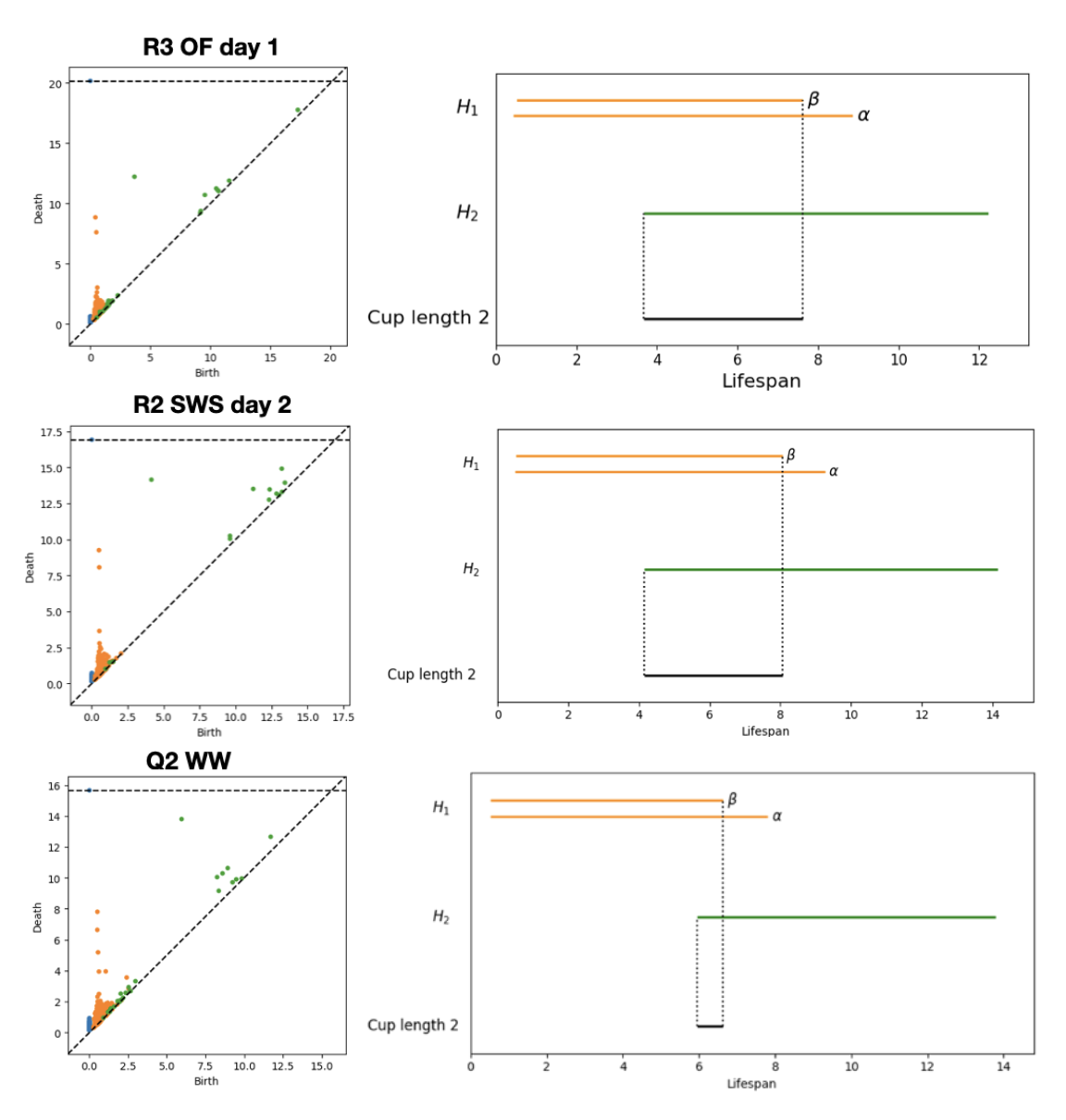}
  \caption{Persistence diagram (left) and cup-length computation (right) for R3 OF day 1, R2 SWS day 2, and Q2 WW modules. Persistence is computed using 500 landmarks. The cup-length computation shows two most persistent 1-dimensional bars (orange), the most persistent 2-dimensional bar (green), as well as the resulting cup-length-2 interval (black).}
  \label{fig:rats5}
\end{figure}

 \begin{figure}
  \centering
  \includegraphics[width=1\textwidth]{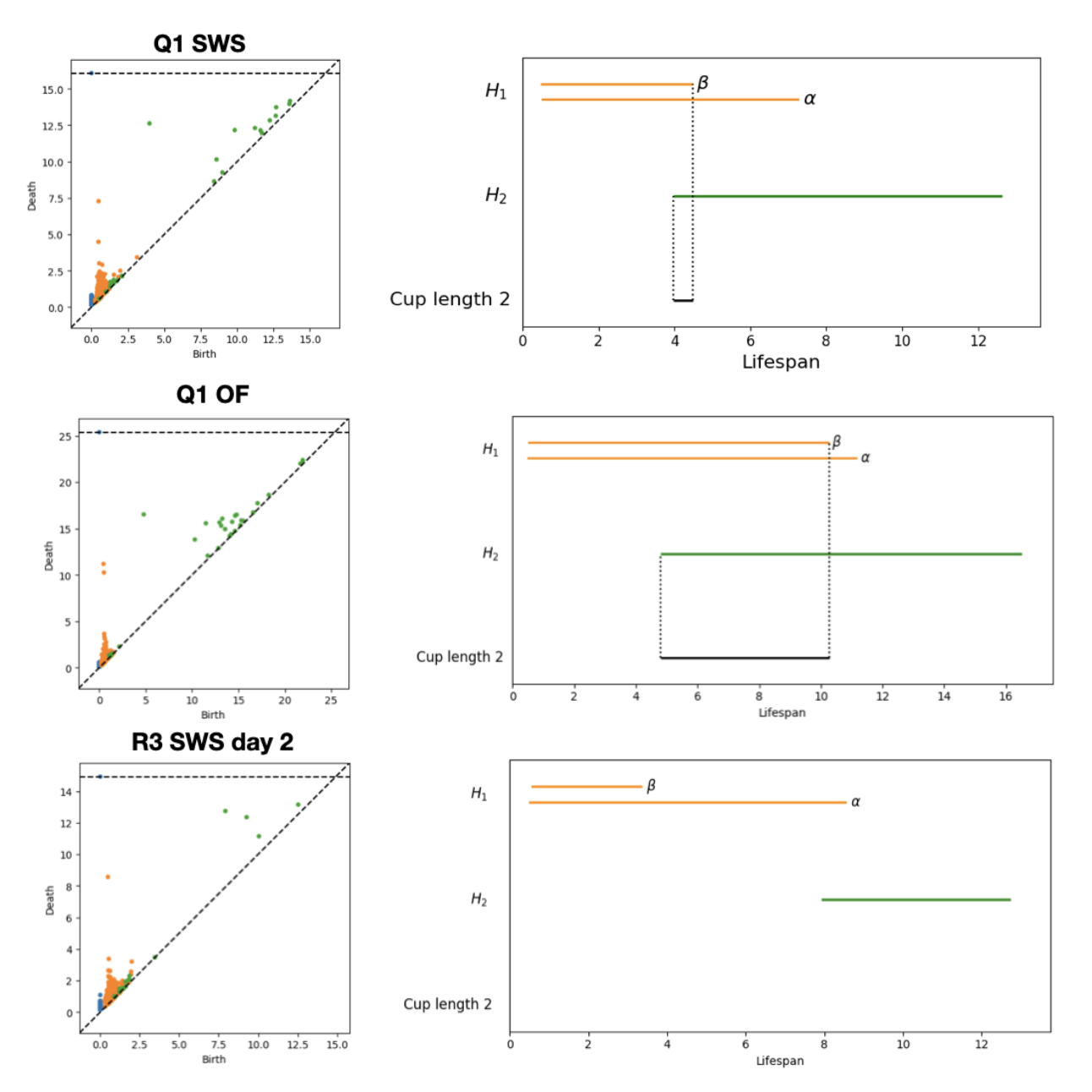}
  \caption{Persistence diagram (left) and cup-length computation (right) for Q1 SWS, Q1 OF, and R3 SWS day 2 modules. Persistence is computed using 500 landmarks. The cup-length computation shows two most persistent 1-dimensional bars (orange), the most persistent 2-dimensional bar (green), as well as the resulting cup-length-2 interval (black).}
  \label{fig:rats6}
\end{figure}

 \begin{figure}
  \centering
  \includegraphics[width=1\textwidth]{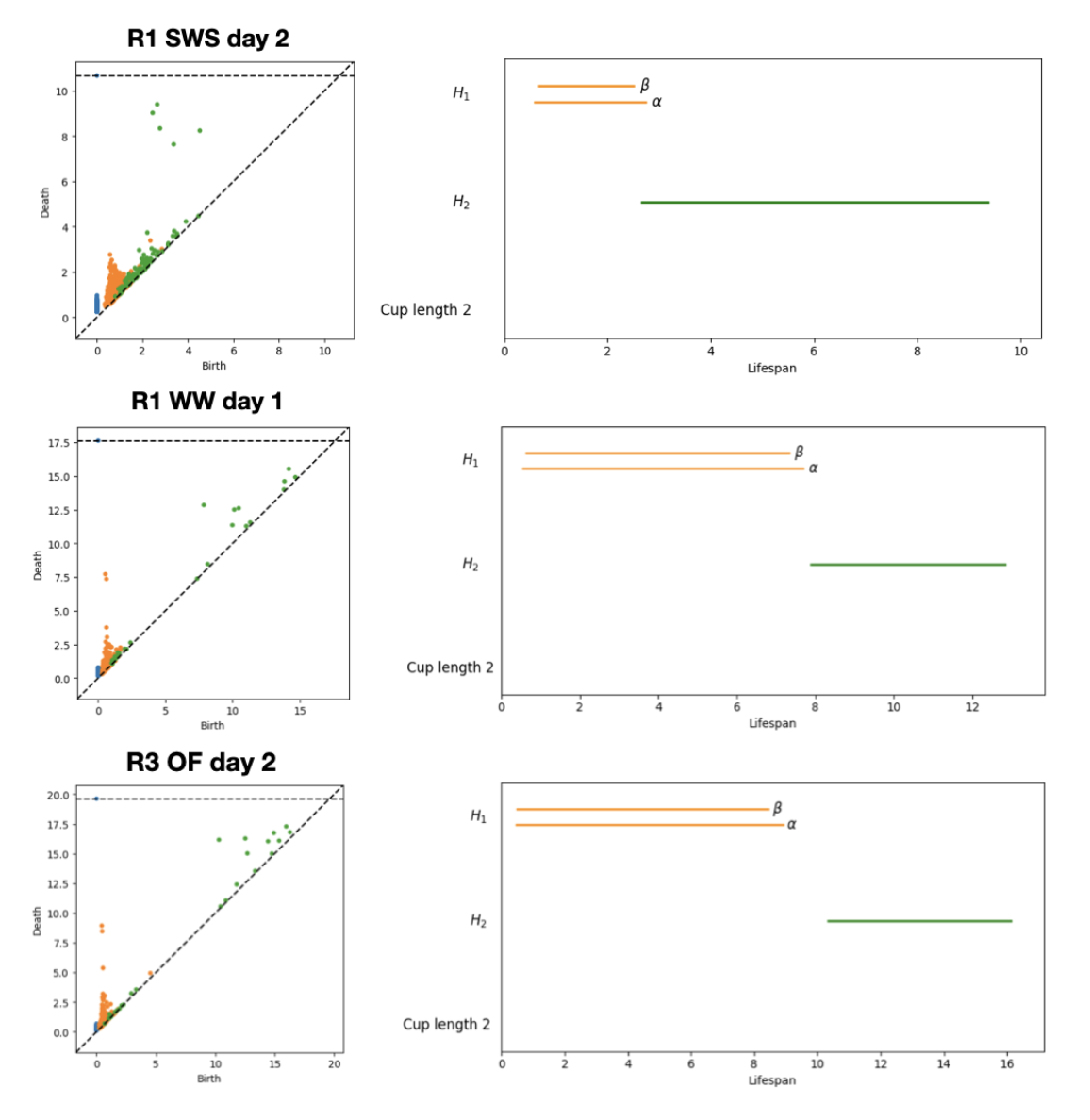}
  \caption{Persistence diagram (left) and cup-length computation (right) for R1 SWS day 2, R1 WW day 1, and R3 OF day 2 modules. Persistence is computed using 500 landmarks. The cup-length computation shows two most persistent 1-dimensional bars (orange), the most persistent 2-dimensional bar (green), as well as the resulting cup-length-2 interval (black).}
  \label{fig:rats7}
\end{figure}

 \begin{figure}
  \centering
  \includegraphics[width=1\textwidth]{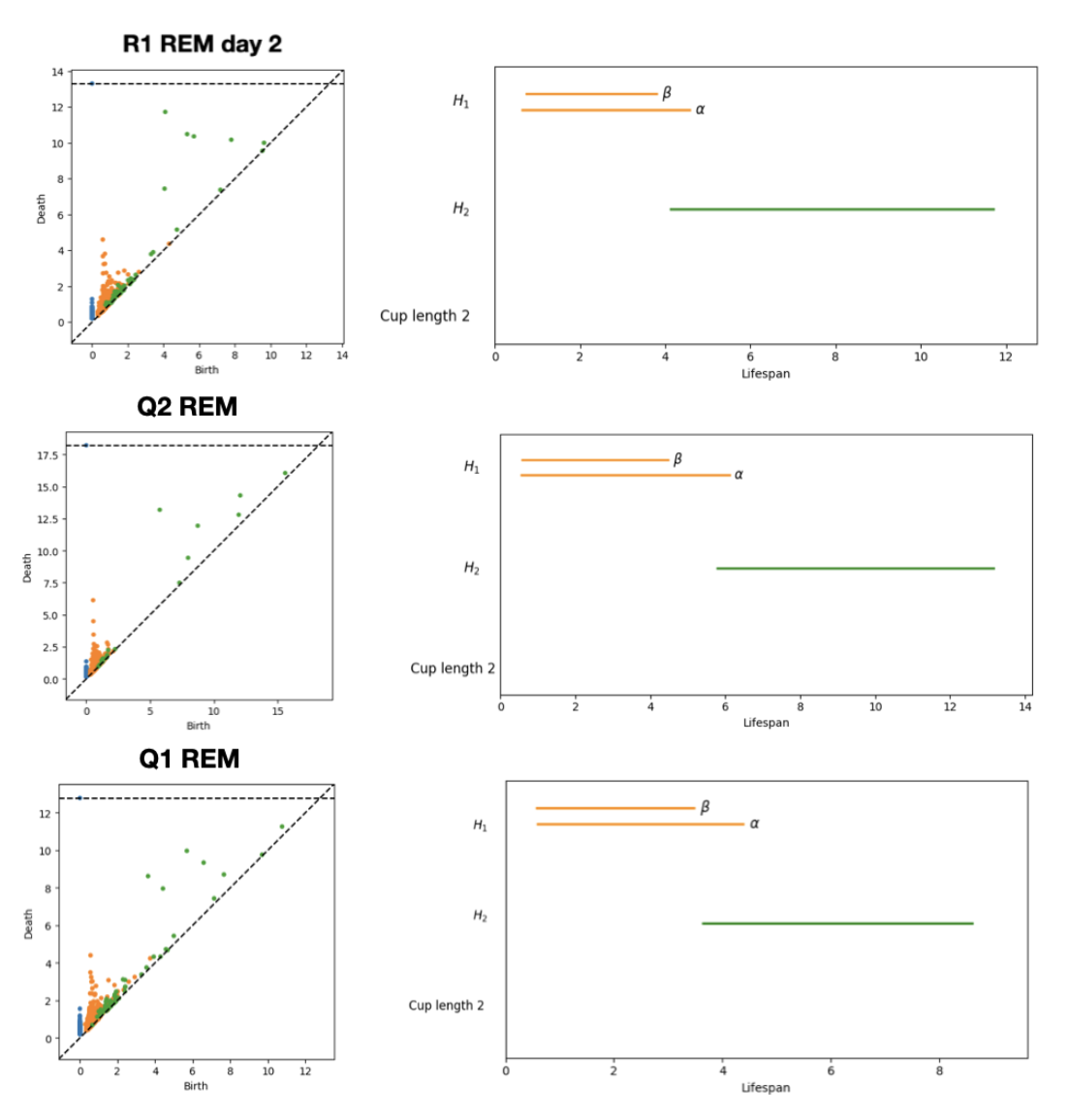}
  \caption{Persistence diagram (left) and cup-length computation (right) for R1 REM day 2, Q2 REM, and Q1 REM modules. Persistence is computed using 500 landmarks. The cup-length computation shows two most persistent 1-dimensional bars (orange), the most persistent 2-dimensional bar (green), as well as the resulting cup-length-2 interval (black).}
  \label{fig:rats8}
\end{figure}

 \begin{figure}
  \centering
  \includegraphics[width=1\textwidth]{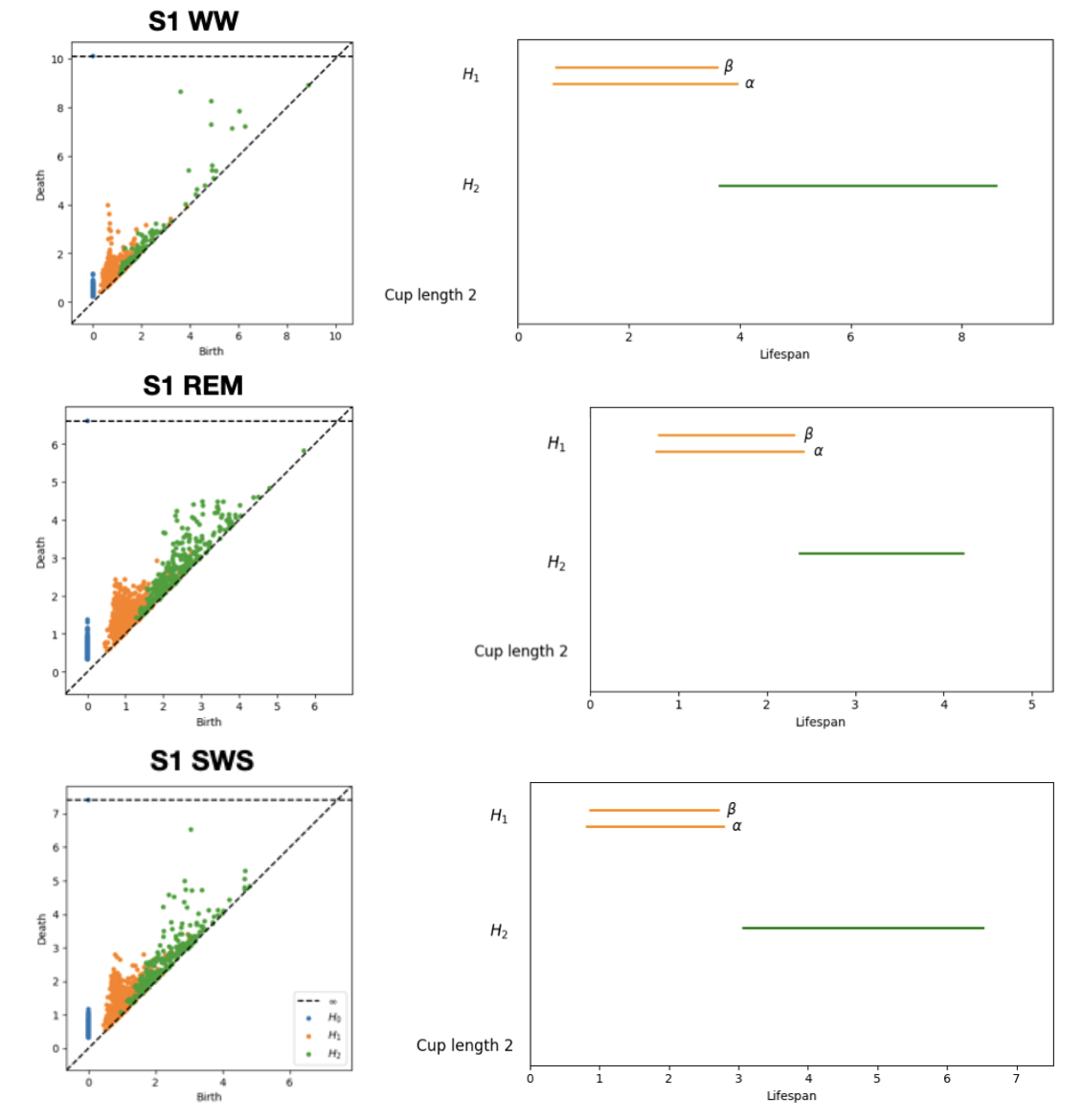}
  \caption{Persistence diagram (left) and cup-length computation (right) for S1 WW, S1 REM, and S1 SWS modules. Persistence is computed using 500 landmarks. The cup-length computation shows two most persistent 1-dimensional bars (orange), the most persistent 2-dimensional bar (green), as well as the resulting cup-length-2 interval (black).}
  \label{fig:rats9}
\end{figure}

\subsection{Code availability}
 
 The code is available on GitHub: \href{https://github.com/eivshina/persistent-cup-length}{https://github.com/eivshina/persistent-cup-length}.

\end{document}